\newtheorem{theorem}{Theorem}
\newtheorem{proposition}[theorem]{Proposition}
\newtheorem{corollary}[theorem]{Corollary}
\newtheorem{lemma}[theorem]{Lemma}
\newtheorem{remark}[theorem]{Remark}
\newtheorem{definition}[theorem]{Definition}
\numberwithin{theorem}{section}
\numberwithin{equation}{section}
\newcommand{\dlambda}{\, {\rm d} \lambda}
\newcommand{\dx}{\, {\rm d} x}
\newcommand{\EE}{\mathbb{E}}
\newcommand{\VV}{\mathbb{V}}
\newcommand{\R}{\mathbb{R}}
\newcommand{\bmu}{\boldsymbol{\mu}}
\newcommand{\bnu}{\boldsymbol{\nu}}
\definecolor{darkgreen}{rgb}{0,0.7,0}
\begin{document}
\title{Quasi-Monte Carlo and Multilevel Monte Carlo Methods for Computing Posterior Expectations in Elliptic Inverse Problems}
\author{R. Scheichl, A.M. Stuart, A.L. Teckentrup}
\date{}

\maketitle

\begin{abstract}
We are interested in computing the expectation of a functional of a PDE solution under a Bayesian posterior distribution. Using Bayes' rule, we reduce the problem to estimating the ratio of two related prior expectations. For a model elliptic problem, we provide a full convergence and complexity analysis of the ratio estimator in the case where Monte Carlo, quasi-Monte Carlo or multilevel Monte Carlo methods are used as estimators for the two prior expectations. We show that the computational complexity of the ratio estimator to achieve a given accuracy is the same as the corresponding complexity of the individual estimators for the numerator and the denominator. We {also include numerical simulations,
in the context of the model elliptic problem, which demonstrate the 
effectiveness of the approach.} 
\end{abstract}

\section{Introduction}
{ Simulation frequently plays an essential role in the mathematical modelling of physical processes. However, the model parameters are often subject to uncertainty. This may be due to incomplete or inaccurate knowledge of the system or due to an inherent variability. It is important to understand how this uncertainty in the input parameters influences the reliability of the simulation outputs.}

In the Bayesian framework, we initially assign a probability distribution, called the {\it prior distribution}, to the input parameters. {In addition {\it observations}, related to the model outputs, are often available, and it is then possible to reduce the overall uncertainty and get a better representation of the input parameters by conditioning the prior distribution on this data. This leads to the {\it posterior} distribution on the input parameters. The goal of the simulations is then often to compute the expected value of a 
{\em quantity of interest} (related to the model outputs) under the posterior 
distribution. This is the problem of {\em Bayesian inference}.}

Typically, the posterior distribution is intractable, in the sense that direct sampling is unavailable.
One way to circumvent this problem is to use a Markov chain Monte Carlo (MCMC) approach to sample from the posterior distribution \cite{robert_casella,kst13,crsw13,hss13,cmps14}. However, for large-scale applications where the number of input parameters is typically large and the solution of the forward model expensive, MCMC methods require careful tuning and may become infeasible in practice. 

{An alternative approach, considered in this paper, is to note that any given posterior
expectation can be written as the ratio of two prior expectations,
both involving the {\em likelihood}. The denominator is the normalising constant in Bayes' rule, namely the expected value under the prior of the likelihood.
The numerator is similar, but the likelihood is weighted by the test function
of interest.} This approach has already been considered in \cite{ss12,ss13,ss14}, where the prior expectations are computed using adaptive sparse grid techniques. Similar ideas are also found in \cite{apss15} in the context of importance sampling. The focus of this work is to use sampling methods to compute the prior expectations, which are also well-suited to the case of high dimensional inputs. In particular, we investigate the use of Monte Carlo (MC), quasi-Monte Carlo (QMC) \cite{niederreiter,ks05,kss12,gknsss15} and multilevel Monte Carlo (MLMC) \cite{giles08,heinrich01,bsz11,tsgu13,anst14,ehm14} methods. { The work is closely related to the independent works \cite{dggs16,dggs16_2,gp16} that also investigate the use of QMC methods in computing posterior expectations. The papers \cite{dggs16,dggs16_2} are narrower in the range of problems considered. Their analysis is based on holomorphy arguments which require uniformly bounded coefficients, thus excluding the Gaussian case considered here. On the other hand, the range of methods considered is wider and includes higher-order QMC and multilevel QMC methods. The work \cite{gp16} investigates the use of higher order QMC methods for computing posterior expectations arising from partial differential equations posed on random domains, again in the case of bounded parameters.}

As a particular example, we consider the model inverse problem of determining the
distribution of the diffusion coefficient of a {divergence form}
elliptic partial differential equation (PDE) from observations of a finite set of noisy continuous functionals of the solution. The coefficient distribution is assumed to be determined by an infinite number of scalar parameters through a basis expansion. In contrast to the works \cite{ss12,ss13,ss14,dggs16}, our analysis includes also results in the technically demanding case of log-normal diffusion coefficients, where the differential operator depends in a non-affine way on the parameters, each of which is modelled as a Gaussian random variable under the prior distribution. We provide a full convergence and complexity analysis of the estimator of the posterior expectation in the case of MC, QMC and MLMC 
sampling. {We also demonstrate the effectiveness of this approach for the
estimation of a typical quantity of interest derived from the elliptic
inverse problem. The main conclusion of our work} is that, for a given accuracy, the cost of computing the posterior expectation with any of these Monte Carlo variants is proportional to the computational complexity of the same estimator for prior expectations.  

The remainder of this paper is organised as follows. Section \ref{sec:invprob} provides the mathematical set-up of the inverse problem of interest, including the formulation of ratio estimators for posterior expectations. Section \ref{sec:fe} is then devoted to the analysis of the error committed by approximating the governing equations by finite elements, and Section \ref{sec:samp} introduces MC, QMC and MLMC estimators together with bounds on their sampling errors, extending the QMC analysis to non-linear functionals. In Section \ref{sec:mse}, we then provide a full convergence and complexity analysis of ratio estimators of posterior expectations. We demonstrate the performance of the proposed ratio estimators on a {specific quantity of interest} in Section \ref{sec:num}, and finally provide some conclusions in Section \ref{sec:conc}.

\section{Bayesian Inverse Problems}\label{sec:invprob}
Let $X$ and $V$ be separable Banach spaces, 
and define the Borel measurable mappings $\mathcal{G}: X \rightarrow V$ and $\mathcal H : V \rightarrow \mathbb R^m$, for some $m \in \mathbb N$. We will refer to $\mathcal{G}$ as the {\em forward map} and to $\mathcal H$ as the {\em observation operator}. We denote by $\mathcal F: X \rightarrow \mathbb R^m$ the composition of $\mathcal H$ and $\mathcal{G}$, and by $|\cdot|$  the Euclidean norm on $\mathbb R^m$. 
The inverse problem of interest is to determine the unknown function $u \in X$ from the noisy {\em observations} (or {\em data}) $y \in \R^m$ given by
\begin{equation}
y = \mathcal H (\mathcal{G}(u)) + \eta,
\end{equation}
where the noise $\eta$ is a realisation of the $\mathbb R^m$-valued Gaussian random variable $\mathcal N(0,\Gamma)$, for some (known) covariance matrix $\Gamma$. For simplicity, we will assume that $\Gamma = \sigma_\eta^2 I$, for some positive constant $\sigma_\eta^2$. 

We adopt a Bayesian perspective in which, in the absence of data, $u$ is distributed according to a prior measure $\mu_0$. Under the conditions given in
{Proposition} \ref{thm:rad_nik} below, the posterior distribution $\mu^y$ on the conditioned random variable $u | y$ is absolutely continuous with respect to $\mu_0$ and given by an infinite dimensional version of Bayes' Theorem. This takes the form
\begin{equation}\label{eq:rad_nik}
\frac{d\mu^y}{d\mu_0}(u) = \frac{1}{Z} \theta(\mathcal{G}(u)),
\end{equation}
where
\begin{equation}\label{eq:def_theta}
\theta(\zeta) = \exp[-\Phi(\zeta)], \quad \Phi(\zeta) = \frac{1}{2 \sigma_\eta^2} \left| y -  \mathcal H (\zeta) \right|^2  \quad \text{and} \quad Z = \EE_{\mu_0}[\theta(\mathcal{G}(u))].
\end{equation}
The following {proposition from \cite{stuart10}}
provides conditions under which the posterior distribution $\mu^y$ is well defined and satisfies \eqref{eq:rad_nik}.

\begin{proposition} \label{thm:rad_nik}
Assume the map $\mathcal F : X \rightarrow \mathbb R^m$ is continuous and $\mu_0(X) = 1.$ Then the posterior distribution $\mu^y$ is absolutely continuous with respect the prior distribution $\mu_0$, with Radon-Nikodym derivative given by \eqref{eq:rad_nik}.
\end{proposition}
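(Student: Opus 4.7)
The plan is to identify the posterior as the conditional law of $u$ given $y$ under the explicitly constructed joint distribution of $(u,y)$. Under the prior, $u \sim \mu_0$ and $\eta \sim \mathcal{N}(0,\Gamma)$ are independent, so I would take the joint law $\nu$ on $X \times \mathbb{R}^m$ to be the pushforward of the product $\mu_0 \otimes \mathcal{N}(0,\Gamma)$ under the Borel map $(u,\eta) \mapsto (u,\mathcal{F}(u)+\eta)$. The target is then to show that the measure $\widetilde{\mu}^y$ defined by the right-hand side of \eqref{eq:rad_nik} coincides with the regular conditional law of $u$ given $y$ under $\nu$.

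Before doing so I would record the measurability and integrability facts that make the formula meaningful. Continuity of $\mathcal{F} = \mathcal{H} \circ \mathcal{G}$ and of $\Phi$ on $\mathbb{R}^m$ makes $u \mapsto \theta(\mathcal{G}(u))$ continuous and hence Borel measurable on $X$, while non-negativity of $\Phi$ gives the pointwise bound $0 < \theta(\mathcal{G}(u)) \le 1$. Consequently $Z \in (0,1]$, and in particular $Z > 0$, so \eqref{eq:rad_nik} defines a bona fide probability measure equivalent to $\mu_0$. To identify $\widetilde{\mu}^y$ as the conditional law, I would test against arbitrary bounded Borel $\phi : X \to \mathbb{R}$ and $\psi : \mathbb{R}^m \to \mathbb{R}$: compute $\mathbb{E}_\nu[\phi(u)\psi(y)]$ by first integrating in $\eta$ using independence of $u$ and $\eta$, then changing variable to $y = \mathcal{F}(u)+\eta$, and finally swapping the order of integration via Fubini. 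The Gaussian noise density produces the factor $(2\pi\sigma_\eta^2)^{-m/2}\theta(\mathcal{G}(u))$; taking $\phi \equiv 1$ identifies the $y$-marginal as having Lebesgue density proportional to $Z$, and matching the general case yields the disintegration $\nu(du,dy) = \widetilde{\mu}^y(du)\,\pi_Y(dy)$, which is precisely \eqref{eq:rad_nik}.

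The one genuine subtlety is the infinite-dimensional nature of $X$: Bayes' rule in the form \eqref{eq:rad_nik} requires a regular conditional probability to exist, and this is only guaranteed on Polish spaces. Separability of the Banach space $X$, combined with $\mu_0(X)=1$, supplies the Polish structure, so the formal density identity above promotes to an honest conditional distribution. The strict pointwise positivity $\theta > 0$ moreover ensures that the conclusion holds for every $y \in \mathbb{R}^m$, rather than merely almost every $y$ under the marginal $\pi_Y$. Continuity of $\mathcal{F}$, somewhat strikingly, is used only to secure Borel measurability of the potential and plays no further role.
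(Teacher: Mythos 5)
The paper does not actually prove this proposition: it is quoted verbatim from the reference \cite{stuart10} (Stuart's Acta Numerica survey), so there is no in-paper argument to compare against. Your proof is a correct, self-contained reconstruction of the standard argument from that source: build the joint law of $(u,y)$ as the pushforward of $\mu_0\otimes\mathcal N(0,\Gamma)$, use continuity of $\mathcal F$ only to get Borel measurability of $u\mapsto\theta(\mathcal G(u))$, observe $0<\theta\le 1$ so that $Z\in(0,1]$ and the candidate measure is well defined, and then identify it as the regular conditional law by testing against product functions and invoking Fubini together with the existence of regular conditional probabilities on the Polish space $X$ (this is where separability and $\mu_0(X)=1$ enter). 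The one place where you overstate slightly is the claim that the conclusion holds for \emph{every} $y\in\mathbb R^m$: a conditional distribution is only determined up to $\pi_Y$-null sets, so what positivity of $Z(y)$ really buys you is that the formula \eqref{eq:rad_nik} defines a genuine probability measure for each $y$ and provides a distinguished \emph{version} of the conditional law valid for all $y$; this is how the posterior is interpreted in \cite{stuart10} as well, so the point is cosmetic rather than a gap.
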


In applications, it is often of interest to compute the expectation of a functional $\phi : V \rightarrow \mathbb R$ of $\mathcal{G}(u)$ under the posterior distribution $\mu^y$. If we define
\begin{equation}\label{eq:def_psi}
\psi(\zeta) = \theta(\zeta) \, \phi(\zeta) \qquad \text{and} \qquad Q = \EE_{\mu_0}[\psi(\mathcal{G}(u))],
\end{equation}
it follows from \eqref{eq:rad_nik} that the posterior expectation of $\phi(\mathcal{G}(u))$ can be written as
\begin{equation}\label{eq:rat}
\EE_{\mu^y}[\phi(\mathcal{G}(u))] = \frac{\EE_{\mu_0}[\psi(\mathcal{G}(u))]}{\EE_{\mu_0}[\theta(\mathcal{G}(u))]} = \frac{Q}{Z}.
\end{equation}
We will approximate $\EE_{\mu^y}[\phi(\mathcal{G}(u))]$ by using different Monte Carlo type methods to compute the prior expectations $Z$ and $Q$.

\subsection{Parametrisation of the Unknown Input}\label{ssec:invprob_para}
We consider the setting where the Banach space $X$ is a space of real-valued functions defined on a bounded spatial domain $D \subset \mathbb R^d$, for some dimension $d = 1, 2$ or $3$. For ease of presentation, we shall restrict our attention to the case $X = C(\overline D)$, the space of continuous functions on $\overline D$, but other choices are possible (cf. Remark \ref{rem:piecewise}), as is the extension to vector-valued functions.

We assume that the unknown function $u \in X$ admits a parametric representation of the form
\begin{equation}\label{eq:def_upara}
u(x) = m_0(x) + \sum_{j=1}^\infty u_j \phi_j(x),
\end{equation}
where $m_0 \in X$, $\{\phi_j\}_{j=1}^\infty$ denotes an infinite sequence in 
$X$ {(typically {normalised} to one in $X$ or in a larger space containing $X$)} and $\{u_j\}_{j=1}^\infty \subset \mathbb R^\infty$ denotes a set of real-valued coefficients. By randomising the coefficients $\{u_j\}_{j=1}^\infty$, we create real-valued random functions on $\overline D$. 
To this end, we introduce the deterministic, monotonically non-increasing sequence $\gamma = \{\gamma_j\}_{j=1}^\infty$ and the i.i.d random sequence $\xi = \{\xi_j\}_{j=1}^\infty$, and set $u_j = \gamma_j \, \xi_j$. To emphasise the dependence of $u$ on $\xi$, we will write $u = u(x; \xi)$. 

We will consider two specific examples of the infinite series representation \eqref{eq:def_upara}, referred to as uniform priors and Gaussian priors, respectively.

\subsubsection{Uniform Priors}\label{ssec:invprob_para_unif}
In the case of uniform priors, we specify the { i.i.d.} sequence of random variables $\xi = \{\xi_j\}_{j=1}^\infty$ by choosing {$\xi_j \sim U[-1,1]$}, a uniform random variable on $[-1,1]$, and the deterministic sequence $\gamma$ is chosen absolutely summable, $\gamma \in \ell^1 (\R^\infty)$. The functions $\{\phi_j\}_{j=1}^\infty$ and $m_0$ are chosen as elements of $C(\overline D)$, and are assumed normalised so that $\|\phi_j\|_{C(\overline D)}=1$, for all $j \in \mathbb N$. 

We then have the following result from \cite{ds14}.

\begin{lemma}\label{lem:unif_bound} Suppose there are finite, strictly positive constants $m_\mathrm{min}, m_\mathrm{max}$ and $r$ such that
\begin{equation*}
\min_{x \in \overline D} m_0(x) \; \geq\; m_\mathrm{min}, \quad \max_{x \in \overline D} m_0(x) \;\leq\; m_\mathrm{max} \quad \text{and} \quad \|\gamma\|_{\ell^1} = \frac{r}{1+r} m_\mathrm{min}.
\end{equation*}
Then the following holds almost surely: the function $u(\cdot; \xi)$ defined in \eqref{eq:def_upara} is in $C(\overline D)$ and satisfies the bounds
\begin{equation*}
\frac{1}{1+r} m_\mathrm{min} \leq u(x ; \xi) \leq m_\mathrm{max} + \frac{r}{1+r} m_\mathrm{min}, \quad \text{for almost all } \; x \in D.
\end{equation*}
\end{lemma}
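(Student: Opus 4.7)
The plan is to verify the two claims (continuity of $u(\cdot;\xi)$ and the pointwise bounds) by applying the Weierstrass $M$-test to the series \eqref{eq:def_upara}, using the fact that uniform random variables on $[-1,1]$ are almost surely bounded in modulus by one.

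First I would fix the event $\Omega^\star = \{\xi \in \mathbb{R}^\infty : |\xi_j| \leq 1 \text{ for all } j \in \mathbb{N}\}$. Since each $\xi_j \sim U[-1,1]$ satisfies $|\xi_j| \leq 1$ with $\mathbb P$-probability one, and a countable intersection of probability-one events has probability one, $\mathbb P(\Omega^\star) = 1$. For any $\xi \in \Omega^\star$ and any $x \in \overline D$, the normalisation $\|\phi_j\|_{C(\overline D)} = 1$ yields the pointwise bound $|\gamma_j \xi_j \phi_j(x)| \leq |\gamma_j|$, so the assumption $\gamma \in \ell^1(\R^\infty)$ gives a summable majorant independent of $x$. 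The Weierstrass $M$-test then implies that the series in \eqref{eq:def_upara} converges uniformly on $\overline D$, and since each partial sum $m_0 + \sum_{j=1}^N \gamma_j \xi_j \phi_j$ is continuous (as $m_0, \phi_j \in C(\overline D)$), the uniform limit $u(\cdot;\xi)$ lies in $X = C(\overline D)$.

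For the pointwise bounds, I would apply the triangle inequality to \eqref{eq:def_upara}: for $\xi \in \Omega^\star$ and $x \in \overline D$,
\begin{equation*}
m_0(x) - \sum_{j=1}^\infty |\gamma_j| \;\leq\; u(x;\xi) \;\leq\; m_0(x) + \sum_{j=1}^\infty |\gamma_j|.
\end{equation*}
Substituting the assumed bounds $m_\mathrm{min} \leq m_0(x) \leq m_\mathrm{max}$ together with $\|\gamma\|_{\ell^1} = \frac{r}{1+r} m_\mathrm{min}$ gives
\begin{equation*}
m_\mathrm{min} - \tfrac{r}{1+r} m_\mathrm{min} \;=\; \tfrac{1}{1+r} m_\mathrm{min} \;\leq\; u(x;\xi) \;\leq\; m_\mathrm{max} + \tfrac{r}{1+r} m_\mathrm{min},
\end{equation*}
which is the claimed two-sided bound.

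There is essentially no deep obstacle in this proof; the only point requiring mild care is the almost-sure statement, where one must observe that the bound $|\xi_j| \leq 1$ holds simultaneously in $j$ on an event of full measure, rather than merely for each $j$ individually. Once that is noted, everything else reduces to the Weierstrass $M$-test and the triangle inequality.
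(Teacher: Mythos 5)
Your proof is correct and complete: the full-measure event on which $|\xi_j|\le 1$ holds simultaneously for all $j$, the Weierstrass $M$-test with majorant $|\gamma_j|$ (using $\|\phi_j\|_{C(\overline D)}=1$ and $\gamma\in\ell^1$), and the triangle-inequality bounds combined with $\|\gamma\|_{\ell^1}=\frac{r}{1+r}m_\mathrm{min}$ give exactly the stated conclusion. The paper itself does not reproduce a proof of this lemma (it is quoted from the cited reference), but your argument is the standard one given there, so there is nothing to add.
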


Note in particular that the upper and lower bounds on $u$ in Lemma \ref{lem:unif_bound} are independent of the particular realisation of the random sequence $\xi$. With $X = C(\overline D)$, it follows from Lemma \ref{lem:unif_bound} that $\mu_0(X)=1$.
We furthermore have the following result from \cite{ds14} on the spatial regularity of the function $u$ in the case where the functions $(m_0, \{\phi_j\}_{j=1}^\infty)$ are H\"older continuous. 

\begin{lemma}\label{lem:unif_holder} Suppose $m_0$ and $\{\phi_j\}_{j \ge 1}$, are in $C^\alpha(\overline D)$, the space of H\"older continuous functions with exponent $\alpha \leq 1$, and suppose $\sum_{j=1}^\infty |\gamma_j|^2 \|\phi_j\|_{C^\alpha(\overline D)}^\beta < \infty$, for some $\beta \in (0,2)$. Then the function $u(\cdot ; \xi)$ defined in \eqref{eq:def_upara} is in $C^t(\overline D)$ almost surely, for any $t < \alpha \beta / 2$.
\end{lemma}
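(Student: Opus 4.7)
The natural approach is to apply Kolmogorov's continuity criterion to the random field $u(\cdot;\xi)$: once one has, for every integer $q\ge 1$ with $q\alpha\beta>d$, a moment bound of the form
\begin{equation*}
\EE\bigl[|u(x;\xi)-u(x';\xi)|^{2q}\bigr] \le C_q\,|x-x'|^{q\alpha\beta} \qquad \text{uniformly in } x,x'\in\overline D,
\end{equation*}
Kolmogorov's theorem produces a continuous modification lying in $C^t(\overline D)$ for every $t<\alpha\beta/2 - d/(2q)$, and letting $q\to\infty$ covers all $t<\alpha\beta/2$. By Lemma~\ref{lem:unif_bound} and the normalisation $\|\phi_j\|_{C(\overline D)}=1$ fixed in Section~\ref{ssec:invprob_para_unif}, the series \eqref{eq:def_upara} already converges uniformly $\mathbb P$-a.s.\ to a continuous function, so the Kolmogorov modification may be identified with $u(\cdot;\xi)$ itself.

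The first ingredient is a pointwise interpolation estimate. For each basis function one combines the H\"older bound $|\phi_j(x)-\phi_j(x')|\le \|\phi_j\|_{C^\alpha(\overline D)}|x-x'|^\alpha$ with the trivial bound $|\phi_j(x)-\phi_j(x')|\le 2\|\phi_j\|_{C^0(\overline D)}=2$. Writing $a=a^{\beta/2}a^{1-\beta/2}$ and bounding the two factors by the first and second estimate respectively yields, for any $\beta\in(0,2)$,
\begin{equation*}
|\phi_j(x)-\phi_j(x')|^2 \le 2^{2-\beta}\,\|\phi_j\|_{C^\alpha(\overline D)}^{\beta}\,|x-x'|^{\alpha\beta}.
\end{equation*}
Multiplying by $\gamma_j^2$, summing and invoking the hypothesis $\sum_j \gamma_j^2\|\phi_j\|_{C^\alpha(\overline D)}^\beta<\infty$ gives $\sum_j\gamma_j^2|\phi_j(x)-\phi_j(x')|^2\le C|x-x'|^{\alpha\beta}$.

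For the increment $u(x;\xi)-u(x';\xi)=[m_0(x)-m_0(x')]+\sum_j \gamma_j\xi_j[\phi_j(x)-\phi_j(x')]$, the deterministic piece is $O(|x-x'|^\alpha)$, which on the bounded domain $\overline D$ and since $\beta\le 2$ is dominated by $|x-x'|^{\alpha\beta/2}$. For the stochastic piece, the summands are independent, mean zero and uniformly bounded, hence sub-Gaussian, so a Khintchine / Marcinkiewicz--Zygmund type inequality yields
\begin{equation*}
\EE\!\left[\left|\sum_j \gamma_j\xi_j[\phi_j(x)-\phi_j(x')]\right|^{2q}\right] \le \widetilde C_q\!\left(\sum_j \gamma_j^2[\phi_j(x)-\phi_j(x')]^2\right)^{\!q}\!\le C_q'\,|x-x'|^{q\alpha\beta},
\end{equation*}
and together with the deterministic estimate this delivers the moment bound required in the first paragraph.

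The main obstacle is obtaining the Khintchine-type bound with constants depending only on $q$, and in particular independent of the $\gamma_j$, the $\phi_j$, and any truncation level. For the uniform $\xi_j$ this is essentially Hoeffding's lemma, which gives the sub-Gaussian moment generating function estimate $\EE[\exp(\lambda\sum_j a_j\xi_j)]\le \exp(\lambda^2\|a\|_{\ell^2}^2/6)$ and hence $\|\sum_j a_j\xi_j\|_{L^{2q}}\lesssim \sqrt{q}\,\|a\|_{\ell^2}$; the passage from finite partial sums to the infinite series is routine via monotone convergence, using that the $L^2$ norm of the full sum is finite by the summability hypothesis. Everything else --- the interpolation, the uniformity over $x,x'$, and the invocation of the Kolmogorov criterion --- is standard.
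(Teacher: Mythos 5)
Your argument is correct: the interpolation $|\phi_j(x)-\phi_j(x')|^2\le 2^{2-\beta}\|\phi_j\|_{C^\alpha(\overline D)}^{\beta}|x-x'|^{\alpha\beta}$, the Khintchine/Hoeffding moment bound for the bounded independent summands, and the Kolmogorov continuity criterion with $q\to\infty$ together give exactly the claimed range $t<\alpha\beta/2$. The paper itself offers no proof of this lemma --- it is quoted from \cite{ds14} --- and your route is essentially the standard one used there, so there is nothing substantive to compare.
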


\subsubsection{Gaussian Priors}\label{ssec:invprob_para_gauss}

For Gaussian priors, we specify the i.i.d sequence of random variables $\xi$ by choosing {$\xi_j \sim N(0,1)$}, a standard Gaussian random variable with mean 0 and variance 1. 
{ We choose the sequences $\{\phi_j\}_{j=1}^\infty$ and $\{\gamma_j^2\}_{j=1}^\infty$ to be the eigenfunctions and eigenvalues, respectively, of a covariance operator $\mathcal C :L^2(D) \rightarrow L^2(D)$, such that the series \eqref{eq:def_upara} is the Karhunen-Loeve (KL) expansion of the Gaussian measure $\mu_0 = N(m_0, \mathcal C)$ on $L^2(D)$.}
Denote by $c : D \times D \rightarrow \R$ the covariance kernel corresponding to the covariance operator $\mathcal C$. It follows from Mercer's Theorem that the eigenvalues $\{\gamma_j^2\}_{j=1}^\infty$ are positive and summable, and the equality $c(x,y) = \sum_{j=1}^\infty \gamma_j^2 \phi_j(x) \phi_j(y)$ holds for almost all $x,y \in D$.

We have the following result on the spatial regularity of the function $u$ from \cite{charrier12,ds14}.

\begin{lemma}\label{lem:gauss_holder} Let $\mathcal C$ denote the covariance operator with covariance kernel $c$ satisfying {$c(x,y) = g(\|x-y\|)$, for all $x, y \in D$, some norm $\| \cdot\|$ on $\mathbb R^d$ and some Lipschitz continuous function $g \in C^{0,1}(\overline D)$}.
Let $\{\phi_j\}_{j=1}^\infty$ and $\{\gamma_j^2\}_{j=1}^\infty$ be the eigenfunctions and eigenvalues of $\mathcal C$, respectively, and suppose $m_0 \in C^t(\overline D)$, for some $t < 1/2$. Then, the function $u(\cdot ; \xi)$ defined in \eqref{eq:def_upara} is also in $C^t(\overline D)$ almost surely.
\end{lemma}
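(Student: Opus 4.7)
The plan is to apply the Kolmogorov continuity theorem to the centred Gaussian field $\tilde u(x;\xi) = u(x;\xi) - m_0(x) = \sum_{j=1}^\infty \gamma_j \xi_j \phi_j(x)$, using the covariance hypothesis to control the second moment of its increments, and then combine this with the assumed Hölder regularity of $m_0$.

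First I would split $u(\cdot;\xi) = m_0 + \tilde u(\cdot;\xi)$. Since $m_0 \in C^t(\overline D)$ by assumption, it remains to show that $\tilde u(\cdot;\xi) \in C^t(\overline D)$ almost surely for every $t < 1/2$. For any fixed pair $x,y \in \overline D$, the random variable $\tilde u(x;\xi) - \tilde u(y;\xi)$ is a centred Gaussian, because it is an $L^2(\mathbb P)$-limit of finite linear combinations of the iid $N(0,1)$ variables $\xi_j$. Its variance can be computed using Mercer's formula $c(x,y) = \sum_j \gamma_j^2 \phi_j(x)\phi_j(y)$ stated in the text:
\begin{equation*}
\EE\!\left[(\tilde u(x;\xi) - \tilde u(y;\xi))^2\right] = c(x,x) - 2\,c(x,y) + c(y,y).
\end{equation*}
Invoking the hypothesis $\mathrm{Tr}\, c(x,y) = g(\|x-y\|)$ (which in the scalar-valued setting reduces to $c(x,y) = g(\|x-y\|)$) and the Lipschitz property of $g$, this variance is bounded by $2(g(0) - g(\|x-y\|)) \leq 2 L \|x-y\|$, where $L$ denotes the Lipschitz constant of $g$.

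Next, from the Gaussian moment identity $\EE[|Z|^p] = C_p \sigma^p$ for $Z \sim N(0,\sigma^2)$, I obtain, for every even integer $p\ge 2$,
\begin{equation*}
\EE\!\left[|\tilde u(x;\xi) - \tilde u(y;\xi)|^p\right] \;\leq\; C_p\,(2L)^{p/2}\,\|x-y\|^{p/2}.
\end{equation*}
Applying the Kolmogorov continuity theorem on $\overline D \subset \mathbb R^d$ then produces a modification of $\tilde u$ that is Hölder continuous with any exponent strictly less than $\tfrac12 - \tfrac{d}{p}$. Letting $p \to \infty$ yields almost sure membership in $C^s(\overline D)$ for every $s < 1/2$.

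The main point of care is the translation of the covariance hypothesis into the increment variance bound; once the Lipschitz estimate on the variance is in hand, the rest is a direct application of the Gaussian Kolmogorov criterion. Finally I would combine $m_0 \in C^t(\overline D)$ with $\tilde u(\cdot;\xi) \in C^s(\overline D)$ for all $s < 1/2$, and take $s \geq t$, to conclude $u(\cdot;\xi) \in C^t(\overline D)$ $\mathbb P$-almost surely for any $t < 1/2$.
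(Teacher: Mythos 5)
The paper does not actually prove this lemma: it is quoted verbatim from the cited references (Charrier 2012, Dashti--Stuart 2014), so there is no in-paper proof to compare against. Your argument is precisely the standard one used in those references: split off $m_0$, use Mercer's identity to compute the increment variance $\EE[(\tilde u(x)-\tilde u(y))^2]=c(x,x)-2c(x,y)+c(y,y)=2\bigl(g(0)-g(\|x-y\|)\bigr)\le 2L\|x-y\|$, upgrade to $p$-th moments by Gaussianity, and invoke the Kolmogorov continuity criterion with $p\to\infty$ to reach every exponent $s<1/2$. All the steps are correct. The one point you pass over quickly is the identification of the Kolmogorov modification with the function defined pointwise by the series \eqref{eq:def_upara}: Kolmogorov's theorem produces \emph{a} H\"older-continuous version of the field, and to conclude that the KL series itself is a.s.\ in $C^t(\overline D)$ one should either note that the partial sums converge a.s.\ in $C(\overline D)$ (e.g.\ via It\^o--Nisio or a martingale/maximal inequality argument, as done in the cited works) or simply declare that $u(\cdot;\xi)$ is understood as that modification. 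This is a standard caveat rather than a genuine gap, and with that remark your proof is complete and matches the literature's approach.
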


An example of a covariance kernel $c(x,y)$ that satisfies the assumptions of Lemma \ref{lem:gauss_holder} is the {\em exponential} covariance kernel
\begin{equation}\label{eq:cov_exp}
c(x,y) = \sigma^2 \exp[- \|x-y\|_r\, / \, \lambda],
\end{equation}
where the positive parameters $\sigma^2$ and $\lambda$ are known as the variance and correlation length, respectively, and typically $r=1$ or $2$.

It follows from Lemma \ref{lem:gauss_holder} that if the covariance operator $\mathcal C$ is smooth enough, so that the function $g$ is Lipschitz continuous, the function $u(\cdot ; \xi)$ is almost surely continuous. With $X = C(\overline D)$, it hence follows that $\mu_0(X)=1$.

For practical applications, such as the problem described in Section \ref{ssec:invprob_mod}, it is often of interest to construct a function that is strictly positive on $\overline D$. For this reason, we consider the function $a(\cdot; \xi) = \exp[u(\cdot; \xi)]$. Since $u(\cdot ; \xi)$ is almost surely continuous, we can define almost surely the quantities
\begin{equation*}
a_\mathrm{min}(\xi) = \min_{x \in \overline D} a(x; \xi), \qquad \text{and} \qquad a_\mathrm{max}(\xi) = \max_{x \in \overline D} a(x; \xi).
\end{equation*}
We have the following result on the boundedness of the function $a$ \cite{charrier12,cst13}.

\begin{lemma}\label{lem:gauss_exp} Let the assumptions of Lemma \ref{lem:gauss_holder} hold. Then $a(\cdot; \xi) = \exp(u(\cdot; \xi))$ is in $C^t(\overline D)$ almost surely, for any $t < 1/2$. Furthermore, 
\begin{equation*}
0 < a_\mathrm{min}(\xi) \leq a(x ; \xi) \leq a_\mathrm{max}(\xi) < \infty, \qquad \text{for almost all } x \in D \text{ and } \xi \in \R^\infty,
\end{equation*}
and $a_\mathrm{min}^{-1} \in L^r(\R^\infty)$, $a_\mathrm{max} \in L^r(\R^\infty)$ and $a \in L^r(\R^\infty, C^t(\overline D))$, for all $r \in [1, \infty)$. 
\end{lemma}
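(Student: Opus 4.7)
The overall strategy is to deduce the claims for $a=\exp(u)$ from the corresponding properties of $u$ in Lemma \ref{lem:gauss_holder}, using elementary pointwise estimates for the exponential map to handle the H\"older and boundedness statements, and Fernique's theorem to obtain the integrability statements.

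For the pointwise part, I would fix a realisation $\xi$ from the full-$\mathbb P$-measure set on which $u(\cdot;\xi)\in C^t(\overline D)$, as guaranteed by Lemma \ref{lem:gauss_holder}. Since $\overline D$ is compact, $M(\xi):=\|u(\cdot;\xi)\|_{C(\overline D)}<\infty$, and the mean-value theorem applied to the exponential function yields
\begin{equation*}
|a(x;\xi)-a(y;\xi)| \;\leq\; e^{M(\xi)}\,|u(x;\xi)-u(y;\xi)|, \qquad e^{-M(\xi)} \;\leq\; a(x;\xi) \;\leq\; e^{M(\xi)}.
\end{equation*}
This simultaneously delivers $a(\cdot;\xi)\in C^t(\overline D)$, the strict positivity and finiteness of $a_\mathrm{min}(\xi)$ and $a_\mathrm{max}(\xi)$, the pointwise envelopes $a_\mathrm{min}^{-1}(\xi),\,a_\mathrm{max}(\xi)\leq e^{M(\xi)}$, and the estimate
\begin{equation*}
\|a(\cdot;\xi)\|_{C^t(\overline D)} \;\leq\; e^{M(\xi)}\bigl(1+\|u(\cdot;\xi)\|_{C^t(\overline D)}\bigr).
\end{equation*}

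It therefore suffices to show $\EE_\mathbb P[e^{rM}]<\infty$ and $\EE_\mathbb P[\|u\|_{C^t(\overline D)}^{r}]<\infty$ for every $r\in[1,\infty)$. The map $\xi\mapsto u(\cdot;\xi)-m_0$ is a centred Gaussian random element in the separable Banach space $C(\overline D)$, so Fernique's theorem supplies an $\alpha>0$ with $\EE_\mathbb P[\exp(\alpha M^{2})]<\infty$; combined with the elementary inequality $rM\leq \tfrac{\alpha}{2}M^{2}+\tfrac{r^{2}}{2\alpha}$ this gives $\EE_\mathbb P[e^{rM}]<\infty$ for every $r>0$, whence $a_\mathrm{min}^{-1},\,a_\mathrm{max}\in L^r_\mathbb P(\R^\infty)$. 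A Cauchy--Schwarz split applied to the envelope for $\|a(\cdot;\xi)\|_{C^t(\overline D)}$ then reduces $a\in L^r_\mathbb P(\R^\infty,C^t(\overline D))$ to finite moments of $\|u\|_{C^t(\overline D)}$, which follow from a second application of Fernique's theorem, the Gaussian measure having all polynomial moments of its norm in any separable Banach space on which it is concentrated.

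The only delicate point I anticipate is this last application of Fernique's theorem in the $C^t$-norm, since $C^t(\overline D)$ itself is not separable. The standard workaround, already carried out under essentially the present hypotheses in \cite{charrier12}, is to view $u-m_0$ as a centred Gaussian element of the separable closed subspace obtained as the closure of $C^\infty(\overline D)$ in the norm of $C^{t'}(\overline D)$ for some $t<t'<1/2$, and then to use the continuous embedding $C^{t'}(\overline D)\hookrightarrow C^{t}(\overline D)$. I would invoke that result directly rather than rebuilding it here.
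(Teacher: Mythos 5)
The paper does not prove this lemma itself but imports it directly from \cite{charrier12,cst13}, and your argument is a correct reconstruction of the standard proof given there: the mean-value estimate for the exponential, Fernique's theorem applied to the centred Gaussian element $u-m_0$ in the separable space $C(\overline D)$ to control $\EE[e^{rM}]$, and the Cauchy--Schwarz split for the $C^t$-norm moments are exactly the ingredients of the cited proof. You have also correctly identified and handled the one genuinely delicate point, namely that $C^t(\overline D)$ is not separable so Fernique must be applied in a separable little-H\"older-type subspace of $C^{t'}(\overline D)$ with $t<t'<1/2$ before embedding into $C^t(\overline D)$.
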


Other, smoother covariance kernels, such as the {\em Gaussian} kernel
\[
c(x,y) = \sigma^2 \exp[- \|x-y\|_2^2\, / \, \lambda^2]
\] 
or the kernels from the Mat\'ern family, also satisfy the assumptions of Lemmas \ref{lem:gauss_holder}  and \ref{lem:gauss_exp}, but they lead to a significantly higher spatial regularity $t \ge 1/2$ of $a$.

\subsubsection{Finite-dimensional approximation}

{ In simulations, it is often necessary to use a finite-dimensional approximation of the unknown $u$. Given the parametrisation \eqref{eq:def_upara}, this can be achieved by simply truncating the series at finite truncation order $J$, or through best $N$-term approximations \cite{cds10,twz15}. For simplicity, we here choose the former, and make the following assumption in the remainder of this paper.

\noindent
{\bf Assumption A1.}
({\em Finite Truncation Order}) Suppose the coefficients $\{\gamma_j\}_{j=J+1}^\infty$ are all equal to zero, for some finite $J \in \mathbb N$, such that
\begin{equation}\label{eq:def_upara_fin}
u(x; \xi) = u(x; \xi_{\underline J}) = m_0(x) + \sum_{j=1}^J \gamma_j \xi_j \phi_j(x),
\end{equation}
where $\xi_{\underline J} := \{\xi_j\}_{j=1}^J \in \R^J$.
\vspace{1.5ex}

An important question is how one should optimally choose the truncation order $J$ in \eqref{eq:def_upara_fin}, and the answer typically involves a trade-off between choosing $J$ sufficiently large to retain a required accuracy, and sufficiently small to avoid an unnecessarily large computational cost associated to sampling from $u$.
We will in this paper assume that $J \in \mathbb N$ is given, and will not explicitly discuss how to choose $J$. We refer the interested reader to the works \cite{charrier12,kss12,tsgu13,gknsss15}.

The series \eqref{eq:def_upara_fin} defines a linear mapping $P : \mathbb R^J \rightarrow X$, with $P(\xi_{\underline J}) = u$, and we will define the prior measure $\mu_0$ on $X$ as the pushforward under $P$ of a suitable measure $\mathbb P$ defined on the coefficient space $\mathbb R^J$, equipped with the Borel product $\sigma$-algebra. For the uniform priors considered in section \ref{ssec:invprob_para_unif}, the measure $\mathbb P$ is the product measure
\begin{equation}\label{eq:prodmeas_unif}
\mathbb P(\mathrm d \xi_{\underline J}) = \prod_{j=1}^J \frac{\mathrm d \xi_j}{2}. 
\end{equation}
For the Gaussian priors in section \ref{ssec:invprob_para_gauss}, we have
\begin{equation}\label{eq:prodmeas_gauss}
\mathbb P(\mathrm d \xi_{\underline J}) = \prod_{j =1}^J \frac{1}{\sqrt{2\pi}} \, \exp[-\xi_j^2/2] \, \mathrm d \xi_j. 
\end{equation}


\begin{corollary}  Suppose Assumption A1 holds. Then $(i)$ Lemmas \ref{lem:unif_bound} and \ref{lem:unif_holder} hold; and $(ii)$ if $\{\phi_j\}_{j=1}^J$ and $\{\gamma_j^2\}_{j=1}^J$ are chosen as the first $J$ elements of $\{\phi_j\}_{j=1}^\infty$ and $\{\gamma_j^2\}_{j=1}^\infty$ in Lemma \ref{lem:gauss_holder}, respectively, then Lemma \ref{lem:gauss_holder} and \ref{lem:gauss_exp} hold. 
\end{corollary}
\begin{proof}  Part $(i)$ follows directly from Lemmas \ref{lem:unif_bound} and \ref{lem:unif_holder}, since the parametrisation \eqref{eq:def_upara_fin} is just a special case of \eqref{eq:def_upara}. Part $(ii)$ is proven in \cite{charrier12}.
\end{proof}

\begin{remark}\label{rem:ce} \em{(Alternative Approximations)} The truncated parametrisation \eqref{eq:def_upara_fin} is not the only way to obtain an approximation of $u$ from which we can easily produce samples for simulation. In the case of Gaussian priors, knowledge of the covariance kernel $c$ allows us to assemble the covariance matrix of the Gaussian vector $[u(x_1), u(x_2), \dots, u(x_n)]$, for any $n \in \mathbb N$ and $\{x_i\}_{i=1}^n \subseteq D$, and we can hence use methods based on factorisations of the covariance matrix, such as \cite{dn97}, to sample from $u$ at a finite number of locations in the domain $D$. In applications such as the elliptic problem discussed in section \ref{ssec:invprob_mod}, this is usually sufficient, since typically quadrature methods are used to compute the numerical approximation discussed in section \ref{sec:fe}. For more details, we refer the interested reader to \cite{gknss11,gknss15,teckentrup_thesis}.
\end{remark}
}

\subsection{Model Elliptic Problem}\label{ssec:invprob_mod}

We consider the model inverse problem of determining the distribution of the diffusion coefficient of a {divergence form} elliptic partial differential equation (PDE) from observations of a finite set of noisy continuous functionals of the solution. Let $D \subset \R^d$, for $d = 1,2$ or $3$, be a bounded Lipschitz domain, and denote by $\partial D$ its boundary. 
{The forward problem which underlies the inverse problem of interest here
is to find} the solution $p(\cdot; \xi_{\underline J})$ of the following linear elliptic PDE,\vspace{-1ex}
\begin{equation}\label{eq:model}
\qquad\qquad -\nabla \cdot (k(x; \xi_{\underline J}) \nabla p(x; \xi_{\underline J})) = f(x) \quad \text{in } D, \quad  p(\cdot; \xi_{\underline J}) = 0 \quad \text{on } \partial D,
\end{equation}
for given functions $k(\cdot; \xi_{\underline J}) \in C(\overline D)$ and $f \in H^{-1}(D)$. { Although all results in this section apply also in the case of infinite-dimensional parameter vectors $\xi$, we restrict our attention to finite-dimensional $\xi_{\underline J}$ for consistency.}

The variational formulation of \eqref{eq:model} is to find $p(\cdot ; \xi_{\underline J}) \in H^1_0(D)$ such that \vspace{-1ex}
\begin{equation}\label{def:weak}
\qquad b(p,q; \xi_{\underline J}) = L(q), \qquad \text{for all} \; q \in H^1_0(D),
\end{equation}
where the bilinear form $b$ and the linear functional $L$ are defined as usual, for all $v,w \in H^1_0(D)$ by 
\begin{equation}\label{def:forms}
b(v,w; \xi_{\underline J}) = \int_D k(x; \xi_{\underline J}) \nabla v(x) \cdot \nabla w(x) \dx \quad \text{and} \quad L(w) = \langle f, w \rangle_{H^{-1}(D) , H^1_0(D)}. 
\end{equation}
We say that $p(\cdot; \xi_{\underline J})$ is a weak solution to \eqref{eq:model} iff $p(\cdot ; \xi_{\underline J}) \in H^1_0(D)$ and $p(\cdot ; \xi_{\underline J})$ satisfies \eqref{def:weak}.

In the inverse problem, we  take the coefficient $k$ to be a function of the unknown $u$, in which case both the coefficient $k$ and the solution $p$ depend on the random sequence $\xi_{\underline J}$. When the dependence on $\xi_{\underline J}$ of $k$ and $p$ is irrelevant, we will simply write $k = k(x)$ and $p = p(x)$.
With the unknown function $u(\cdot ; \xi_{\underline J})$ as in Section \ref{ssec:invprob_para}, we choose 
\begin{itemize}
\item $k(\cdot ; \xi_{\underline J})=u(\cdot ; \xi_{\underline J})$, in the case of the uniform priors described in Section \ref{ssec:invprob_para_unif}, and
\item $k (\cdot ; \xi_{\underline J})= k^* + \exp(u(\cdot ; \xi_{\underline J}))$, in the case of Gaussian priors, for some given continuous non-negative function $k^* \ge 0$.
\end{itemize}
By Lemmas \ref{lem:unif_bound} and \ref{lem:gauss_exp}, both these choices ensure that the diffusion coefficient $k(\cdot;\xi_{\underline J})$ in \eqref{def:weak} is strictly positive on $D$, $\mathbb P$-almost surely.
In terms of the notation in previous sections, the Banach space $X$ is the space of continuous functions $C(\overline D)$ as before. The forward map $\mathcal{G}$ is defined by $\mathcal{G}(u) = p$, i.e. it maps the unknown function $u(\cdot ; \xi_{\underline J})$ to the solution $p(\cdot ; \xi_{\underline J})$. (Note that the definition of $\mathcal{G}$ differs between the two choices $k=u$ and $k=k^* + \exp(u)$.) We take the Banach space $V$ as the Sobolev space $H^1_0(D)$.

Existence and uniqueness of the weak solution $p(\cdot ; \xi_{\underline J})$ is ensured by the Lax-Milgram Theorem. As in previous sections, let $k_\mathrm{min}(\xi_{\underline J})$ and $k_\mathrm{max}(\xi_{\underline J})$ be such that
\begin{equation*}
0 < k_\mathrm{min}(\xi_{\underline J}) \leq k(x ; \xi_{\underline J}) \leq k_\mathrm{max}(\xi_{\underline J}) < \infty, \qquad \text{for almost all } x \in D \text{ and for } \xi_{\underline J} \ \mathbb{P}\text{-almost surely}.
\end{equation*}
For uniform priors, $k_\mathrm{min}(\xi_{\underline J})$ and $k_\mathrm{max}(\xi_{\underline J})$ are independent of $\xi_{\underline J}$. If $k^*(x) > 0$, for all $x \in \overline{D}$, then $k_\mathrm{min}(\xi_{\underline J})$ is also independent of $\xi_{\underline J}$ in the Gaussian case. 
\begin{definition}\em 
We will refer to the coefficient $k$ as {\em uniformly elliptic} (respectively {\em uniformly bounded}) when $k_\mathrm{min}(\xi_{\underline J})$ (respectively $k_\mathrm{max}(\xi_{\underline J})$) is independent of $\xi_{\underline J}$.
\end{definition}

The following is a direct consequence of the Lax-Milgram Lemma and Lemmas \ref{lem:unif_bound} and \ref{lem:gauss_exp}.

\begin{lemma} \label{lem:laxmil} For $\mathbb P$-almost all $\xi_{\underline J} \in \R^J$, there exists a unique weak solution $p(\cdot; \xi_{\underline J}) \in H^1_0(D)$ to the variational problem \eqref{def:weak} and
\begin{equation*}
\left|p(\cdot; \xi_{\underline J})\right|_{H^1(D)} \leq \frac{\|f\|_{H^{-1}(D)}}{k_\mathrm{min}(\xi_{\underline J})}\,.
\end{equation*}
Furthermore, $p \in L^r_{\mathbb P}(\R^J, H^1_0(D))$, for all $r \in [1, \infty)$. If $k$ is uniformly elliptic, then the result holds also for $r=\infty$.
\end{lemma}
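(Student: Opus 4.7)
The plan is to deduce this lemma by combining the deterministic Lax--Milgram theory, applied $\mathbb P$-almost surely pointwise in $\xi$, with the moment information on $k_\mathrm{min}(\xi)$ that is already encoded in Lemmas \ref{lem:unif_bound} and \ref{lem:gauss_exp}. The three claims (pointwise existence/uniqueness with the energy bound, $L^r$-integrability for finite $r$, and $L^\infty$-integrability under uniform ellipticity) are treated in that order.

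First I would fix any $\xi$ in the full-measure set on which the conclusions of Lemma \ref{lem:unif_bound} (in the uniform case) or Lemma \ref{lem:gauss_exp} (in the Gaussian case) hold. On that set, $k(\cdot;\xi) \in C(\overline D)$ and $0 < k_\mathrm{min}(\xi) \leq k(x;\xi) \leq k_\mathrm{max}(\xi) < \infty$. Consequently the bilinear form $b(\cdot,\cdot;\xi)$ defined in \eqref{def:forms} is coercive on $H^1_0(D)$ with constant $k_\mathrm{min}(\xi)$ and continuous with constant $k_\mathrm{max}(\xi)$, while the linear form $L$ is continuous on $H^1_0(D)$ since $f \in H^{-1}(D)$. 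The Lax--Milgram lemma then yields the existence of a unique $p(\cdot;\xi) \in H^1_0(D)$ solving \eqref{def:weak}. Testing against $q = p(\cdot;\xi)$ and using coercivity gives
\begin{equation*}
k_\mathrm{min}(\xi)\,|p(\cdot;\xi)|_{H^1(D)}^2 \;\leq\; b(p,p;\xi) \;=\; L(p) \;\leq\; \|f\|_{H^{-1}(D)}\,|p(\cdot;\xi)|_{H^1(D)},
\end{equation*}
from which the stated energy bound follows immediately.

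For the $L^r_\mathbb P(\R^\infty, H^1_0(D))$ statement with $r \in [1,\infty)$, I would raise the pointwise bound to the $r$-th power and integrate with respect to $\mathbb P$:
\begin{equation*}
\EE_\mathbb{P}\bigl[\,|p(\cdot;\xi)|_{H^1(D)}^r\,\bigr] \;\leq\; \|f\|_{H^{-1}(D)}^r \, \EE_\mathbb{P}\bigl[\,k_\mathrm{min}(\xi)^{-r}\,\bigr].
\end{equation*}
In the uniform case, $k_\mathrm{min}$ is a positive deterministic constant (by Lemma \ref{lem:unif_bound}), so the right-hand side is trivially finite for every $r$. In the Gaussian case, $k = k^* + a$ with $k^* \geq 0$ and $a = \exp(u)$, hence $k_\mathrm{min}(\xi) \geq a_\mathrm{min}(\xi)$, and Lemma \ref{lem:gauss_exp} gives $a_\mathrm{min}^{-1} \in L^r_\mathbb P(\R^\infty)$ for every $r \in [1,\infty)$. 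Combining these yields the claim.

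Finally, when $k$ is uniformly elliptic, $k_\mathrm{min}(\xi)$ is independent of $\xi$, so the energy bound becomes a deterministic uniform bound on $|p(\cdot;\xi)|_{H^1(D)}$, giving $p \in L^\infty_\mathbb{P}(\R^\infty, H^1_0(D))$. The only mildly delicate point is the Gaussian $L^r$-moment step, but it is handed to us directly by Lemma \ref{lem:gauss_exp}, so the whole argument reduces to a careful bookkeeping of previously established facts rather than any new estimate.
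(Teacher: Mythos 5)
Your proposal is correct and follows exactly the route the paper intends: the paper states this lemma without proof as ``a direct consequence of the Lax--Milgram Lemma and Lemmas \ref{lem:unif_bound} and \ref{lem:gauss_exp},'' and your pointwise Lax--Milgram argument plus the energy estimate and the $k_\mathrm{min}^{-1}$ moment bounds is precisely that deduction. No gaps worth noting beyond the usual (and standardly suppressed) measurability of $\xi \mapsto p(\cdot;\xi)$.
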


In order to conclude on the well-posedness of the posterior distribution $\mu^y$, we furthermore have the following result on the continuity of the forward map $\mathcal{G}$.

\begin{lemma}\label{lem:for_con} The map $\mathcal{G} : X \rightarrow V$, $\mathcal{G}(u) = p$, is continuous.
\end{lemma}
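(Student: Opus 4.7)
The plan is to reduce continuity of $\mathcal{G}$ to the Lipschitz dependence of the weak solution on the diffusion coefficient, using the weak formulation \eqref{def:weak} directly. Fix $u \in X$ and let $u_n \to u$ in $X = C(\overline{D})$. Write $k = k(u)$ and $k_n = k(u_n)$, with $p = \mathcal{G}(u)$ and $p_n = \mathcal{G}(u_n)$. Since convergence in $C(\overline{D})$ implies that $\{u_n\}$ is uniformly bounded for $n$ large, I would argue first that, for both choices $k = u$ (uniform) and $k = k^* + \exp(u)$ (Gaussian), the map $u \mapsto k(u)$ is locally Lipschitz from $C(\overline{D})$ to $C(\overline{D})$: in the uniform case this is trivial, and in the Gaussian case it follows from the pointwise mean value bound $|\exp(a)-\exp(b)| \le \exp(\max(|a|,|b|))\,|a-b|$ applied with $a = u_n(x)$ and $b = u(x)$. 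In particular, $\|k_n - k\|_{C(\overline{D})} \to 0$.

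Next, subtract the variational equations $b(p,q;u) = L(q) = b(p_n,q;u_n)$ to obtain, for all $q \in H^1_0(D)$,
\begin{equation*}
\int_D k_n \nabla(p_n - p)\cdot \nabla q \dx \;=\; \int_D (k - k_n)\,\nabla p \cdot \nabla q \dx.
\end{equation*}
Choosing $q = p_n - p$ and using the lower bound $k_n(x) \ge k_{\mathrm{min}}(\xi_n) > 0$ from Lemmas \ref{lem:unif_bound} and \ref{lem:gauss_exp} together with Cauchy--Schwarz,
\begin{equation*}
k_{\mathrm{min}}(\xi_n)\, |p_n - p|_{H^1(D)}^2 \;\le\; \|k - k_n\|_{C(\overline{D})}\, |p|_{H^1(D)}\, |p_n - p|_{H^1(D)},
\end{equation*}
so that by Lemma \ref{lem:laxmil} (applied to $p$),
\begin{equation*}
|p_n - p|_{H^1(D)} \;\le\; \frac{\|k - k_n\|_{C(\overline{D})}}{k_{\mathrm{min}}(\xi_n)}\; \frac{\|f\|_{H^{-1}(D)}}{k_{\mathrm{min}}(\xi)}.
\end{equation*}

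The only genuine obstacle is to guarantee that $k_{\mathrm{min}}(\xi_n)$ stays bounded away from zero along the sequence, so that the right-hand side tends to zero. In the uniform case $k_{\mathrm{min}}$ is $\xi$-independent and this is immediate. In the Gaussian case one uses that $k_n = k^* + \exp(u_n) \ge \exp(-\|u_n\|_{C(\overline{D})}) \ge \exp(-\|u\|_{C(\overline{D})} - 1)$ for $n$ large, which gives a uniform positive lower bound; alternatively $k^* \ge 0$ combined with $u_n \to u$ uniformly yields $k_n \to k$ uniformly and hence $k_{\mathrm{min}}(\xi_n) \to k_{\mathrm{min}}(\xi) > 0$. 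Combining this with $\|k_n - k\|_{C(\overline{D})} \to 0$ and the Poincar\'e inequality to pass from the $H^1$ seminorm to the full $H^1_0(D)$ norm yields $p_n \to p$ in $V$, which is the desired continuity.
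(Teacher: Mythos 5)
Your proof is correct and follows essentially the same route as the paper: both derive the Lipschitz-type estimate $|p_1-p_2|_{H^1(D)} \le k_{\min}^{-2}\,\|f\|_{H^{-1}(D)}\,\|k_1-k_2\|_{C(\overline D)}$ from the variational formulation and then compose with the continuity of $u\mapsto k(u)$ (trivial or via the exponential). You merely fill in details the paper leaves implicit, in particular the uniform positive lower bound on $k_{\min}$ along the sequence in the Gaussian case.
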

\begin{proof}
Denote by $p_1$ and $p_2$ two weak solutions of \eqref{def:weak} with the same right hand side $f$ and with coefficients $k_1$ and $k_2$, respectively. Let $k_\mathrm{min}$ and $k_\mathrm{max}$ be such that
\begin{equation*}
0 < k_\mathrm{min} \leq k_i(x) \leq k_\mathrm{max} < \infty, \quad \text{for almost all } \; x \in D,
\end{equation*}
for $i=1,2$. Then it follows from the variational formulation \eqref{def:weak} that
\begin{equation*}
|p_1 - p_2|_{H^1(D)} \leq \frac{\|f\|_{H^{-1}(D)}}{k_\mathrm{min}^2} \; \|k_1 - k_2 \|_{C(\overline D)}.
\end{equation*}
In the case $k=u$, the continuity of $\mathcal{G}$ now follows immediately. In the case $k=\exp(u),$ the continuity of $\mathcal{G}$ follows from the continuity of the exponential function.
\end{proof}

We then have the following corollary to {Proposition} \ref{thm:rad_nik}, which follows immediately from Lemmas \ref{lem:unif_bound}, \ref{lem:gauss_holder} and \ref{lem:for_con}, together with the continuity of the observation operator $\mathcal H$.

\begin{corollary} For the forward map $\mathcal{G}$ defined by $\mathcal{G}(u) = p$, the posterior measure $\mu^y$ is absolutely continuous with respect to the prior measure $\mu_0$, with Radon-Nikodym derivative \eqref{eq:rad_nik}.
\end{corollary}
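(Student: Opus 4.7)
The plan is to apply Proposition \ref{thm:rad_nik} directly, so I only need to verify its two hypotheses: (i) continuity of $\mathcal F = \mathcal H \circ \mathcal G : X \to \mathbb R^m$, and (ii) $\mu_0(X) = 1$ for $X = C(\overline D)$.

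For hypothesis (i), I would simply note that Lemma \ref{lem:for_con} gives continuity of $\mathcal G : X \to V = H^1_0(D)$ for both choices $k = u$ and $k = k^* + \exp(u)$, while the observation operator $\mathcal H : V \to \mathbb R^m$ is continuous by assumption. Continuity of $\mathcal F$ then follows from composition.

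For hypothesis (ii), the argument splits according to the prior. In the uniform case, Lemma \ref{lem:unif_bound} states that $u(\cdot;\xi) \in X = C(\overline D)$ holds $\mathbb P$-almost surely, so pushing forward under $P$ gives $\mu_0(X) = 1$. In the Gaussian case, under the covariance assumptions of Lemma \ref{lem:gauss_holder}, the function $u(\cdot;\xi)$ lies in $C^t(\overline D) \subset C(\overline D)$ for some $t < 1/2$ almost surely, so again $\mu_0(X) = 1$.

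With both hypotheses verified, Proposition \ref{thm:rad_nik} immediately yields absolute continuity $\mu^y \ll \mu_0$ together with the Radon--Nikodym derivative \eqref{eq:rad_nik}. There is no genuine obstacle here, since every ingredient has already been established: the corollary is essentially a bookkeeping step that collects Lemmas \ref{lem:unif_bound}, \ref{lem:gauss_holder}, and \ref{lem:for_con}, plus continuity of $\mathcal H$, and plugs them into the abstract Bayes' theorem. The only mild point to keep track of is to handle both the uniform and Gaussian parametrisations in parallel when checking $\mu_0(X) = 1$.
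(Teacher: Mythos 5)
Your proposal is correct and matches the paper's own argument exactly: the paper states that the corollary ``follows immediately from Lemmas \ref{lem:unif_bound}, \ref{lem:gauss_holder} and \ref{lem:for_con}, together with the continuity of the observation operator $\mathcal H$,'' which is precisely the verification of the two hypotheses of Proposition \ref{thm:rad_nik} that you carry out. No discrepancies to report.
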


\begin{remark}\label{rem:piecewise} \em (Piecewise continuous coefficients) Although we here restrict our attention to the case of continuous random coefficients, the theory extends to the piecewise continuous case where a further source of randomness can be introduced in the partitioning of the computational domain $D$ into sub-domains. The well-posedness of the posterior distribution in this case was shown in \cite{ds15}. The regularity and spatial discretisation error (as discussed in Section \ref{sec:fe}) were analysed in \cite{tsgu13,teckentrup_thesis}.
\end{remark}

\section{Finite Element Discretisation}\label{sec:fe}

In this section, we analyse the error introduced in the computation of the prior expectations $Z$ and $Q$ by a finite element approximation of the forward map $\mathcal{G}$. We consider only standard, continuous, piecewise linear finite elements on polygonal/polyhedral domains in detail. To this end, denote by $\{\mathcal T_h\}_{h>0}$ a shape-regular family of simplicial triangulations of the Lipschitz polygonal/polyhedral domain $D$, parametrised by their mesh width $h := \max_{\tau \in \mathcal T_h} \text{diam}(\tau)$. Associated with each triangulation $\mathcal T_h$ we define the space
\begin{equation}\label{def:fe_space}
V_h := \left\{ q_h \in C(\overline D) : q_h |_\tau \; \text{linear for all } \, \tau \in \mathcal T_h \; \; \text{and} \; \; q_h |_{\partial D} = 0 \right\}
\end{equation} 
of continuous, piecewise linear functions on $D$ that vanish on the boundary $\partial D$.

The finite element approximation to \eqref{def:weak}, denoted by $p_h$, is now the unique function in $V_h$ that satisfies 
\begin{equation}\label{def:fe}
b(p_h,q_h; \xi_{\underline J}) = L(q_h), \qquad \text{for all} \; q_h \in V_h,
\end{equation}
where the bilinear form $b$ and the functional $L$ are as in \eqref{def:forms}. Note that, in particular, this implies that $p_h$ satisfies the same bound as in Lemma \ref{lem:laxmil}: 
\begin{equation}\label{eq:fe_h1}
|p_h(\cdot ; \xi_{\underline J})|_{H^1(D)} \leq \|f\|_{H^{-1}(D)} / k_\mathrm{min}(\xi_{\underline J})\,.
\end{equation}
The approximate forward map $\mathcal{G}_h : X \rightarrow V$ is then defined by $\mathcal{G}_h(u) = p_h$, and we denote the resulting approximations of $Z$ and $Q$, respectively, by
\[
Z_h = \EE_{\mu_0}[\theta(\mathcal{G}_h(u))] \quad  \text{and} \quad Q_h = \EE_{\mu_0}[\psi(\mathcal{G}_h(u))].
\]

A standard technique to prove convergence of finite element approximations of functionals is to use a duality argument, similar to the classic Aubin Nitsche trick used to prove optimal convergence rates for the $L^2$ norm. In the context of the elliptic PDE \eqref{eq:model} with random coefficients, this analysis was performed in \cite{tsgu13}. We here summarise the main results of the error analysis, and show that if the observation operator $\mathcal H$ and the functional of interest $\phi$ are smooth enough, the finite element error in the prior expectations $Z_h$ and $Q_h$ converges at the optimal rate.

Let $v, w \in H^1_0(D)$. Given a functional $F : H^1_0(D) \rightarrow \mathbb R$, we denote by $D_v F(w)$ its Fr\'echet derivative at $w$, { applied to} $v$. With $p$ and $p_h$ as before, we define 
\begin{align*}
\overline{D_v F} (p,p_h) &= \int_0^1 D_v F (p + \lambda(p_h - p)) \dlambda, \\
\text{and} \quad \overline{|D_v F|} (p,p_h) &= \int_0^1 |D_v F (p + \lambda(p_h - p))| \dlambda,
\end{align*}
which in some sense are averaged derivatives of $F$ on the path from $p$ to $p_h$. Let us now define the following dual problem: find $z \in H^1_0(D)$ such that
\begin{equation}\label{def:weak_dual}
b(q,z; \xi_{\underline J}) = \overline{D_q F} (p,p_h), \qquad \text{for all} \; q \in H^1_0(D).
\end{equation}
Denote the finite element approximation of the dual solution $z$ by $z_h \in V_h$. It then follows from the Fundamental Theorem of Calculus, Galerkin orthogonality of the primal problem \eqref{def:weak} and boundedness of the bilinear form $b$ that
\begin{equation*}
|F(p) - F(p_h)| = |b(p-p_h, z ; \xi_{\underline J})| = |b(p-p_h, z - z_h ; \xi_{\underline J})| \leq k_\mathrm{max}(\xi_{\underline J}) \; |p - p_h|_{H^1(D)} \; |z - z_h|_{H^1(D)}.
\end{equation*}
In order to prove convergence of the finite element error $|F(p) - F(p_h)| $, it hence suffices to prove convergence of $|p - p_h|_{H^1(D)}$ and $|z - z_h|_{H^1(D)}$. For our further analysis, we make the following assumption on the smoothness of the maps $\phi$ and $\mathcal H$.
{Examples of functionals satisfying Assumption A2  are discussed in \cite{tsgu13}, and include linear functionals, powers of linear functionals and boundary fluxes.}

\noindent
{\bf Assumption A2.}
({\em Differentiability}) Let $\phi$ and $\mathcal H_i$, $i=1, \dots, m$, be continuously Fr\'echet differentiable on the path $\{p + \lambda (p - p_h)\}_{\lambda \in [0,1]}$, and suppose that there exist $t_* \in [0,1], q_* \in [1,\infty]$ and $C_\phi, C_{\mathcal H} \in L^{q_*}_{\mathbb P}(\mathbb R^J)$ such that $f \in H^{t_*-1}(D)$,
\[
\overline{|D_v \phi|}(p,p_h) \leq C_\phi(\xi_{\underline J}) \|v\|_{H^{1-t_*}(D)}, \quad \text{ and } \quad \overline{|D_v \mathcal H_i|}(p,p_h) \leq C_{\mathcal H}(\xi_{\underline J}) \|v\|_{H^{1-t_*}(D)},
\]
for all $v \in H^1_0(D)$ and almost all $\xi_{\underline J} \in \R^J$. 
\vspace{1.5ex}

Let now $F = \phi$ or $F=\mathcal H_i$, for some $i \in \{1, \dots, m\}$. To get well-posedness of the primal problem \eqref{def:weak} and the dual problem \eqref{def:weak_dual}, as well as existence and uniqueness of the solutions $p(\cdot ; \xi_{\underline J}) \in H^1_0(D)$ and $z(\cdot ; \xi_{\underline J}) \in H^1_0(D)$, for almost all $\xi_{\underline J} \in \R^J$, it is sufficient to assume that Assumption A2 holds with $t_*=0$. However, in order to prove convergence of the finite element approximations, it is necessary to require stronger spatial regularity of $p$ and $z$, which requires Assumption A2 to hold for some $t_* > 0$. We have the following result from \cite{cst13,tsgu13}. The assumptions on $D$ being polygonal and convex are purely to simplify the presentation. {Proposition \ref{thm:spat_reg}} also holds for piecewise smooth or for non-convex domains, but typically with stronger restrictions on the range of $s$.

\begin{proposition}\label{thm:spat_reg} Let $D$ be a Lipschitz polygonal, convex domain, let $k \in L^{r_*}_\mathbb{P}(\R^J, C^t(\overline D))$ and $k_\mathrm{min}^{-1} \in L^{r_*}_\mathbb{P}(\R^J)$, for some $t \in (0,1]$ and $r_* \in [1, \infty]$, and let Assumption A2 hold with $t_* = t$ and $q_* = r_*$. Then, the solutions $p$ and $z$ of \eqref{def:weak} and \eqref{def:weak_dual} are both in $L^r_\mathbb{P}(\R^J, H^{1+s}(D))$, for any $s < t$ and $r < r_*$. The result also holds for $r = r_*$ or for $s=t$, if $r_* = \infty$ or $t=1$, respectively.
\end{proposition}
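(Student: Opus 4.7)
The plan is to reduce the proposition to a deterministic (pathwise in $\xi$) elliptic regularity estimate for divergence form operators with Hölder coefficients on convex polygonal domains, applied separately to the primal problem \eqref{def:weak} and the dual problem \eqref{def:weak_dual}, and then to convert pointwise bounds into integrability bounds via Hölder's inequality.

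First, for the primal problem: for almost every $\xi$, the hypotheses give $k(\cdot;\xi)\in C^t(\overline D)$, $k_{\min}(\xi)>0$, and $f\in H^{t-1}(D)$ (this being the role of $f\in H^{t_*-1}(D)$ in Assumption A1 with $t_*=t$). Standard elliptic regularity on a convex polygonal domain (e.g. Grisvard) yields $p(\cdot;\xi)\in H^{1+s}(D)$ for all $s<t$, together with an estimate of the form
\begin{equation*}
\|p(\cdot;\xi)\|_{H^{1+s}(D)}\;\le\; C(s,D)\,k_{\min}(\xi)^{-a}\bigl(1+\|k(\cdot;\xi)\|_{C^t(\overline D)}\bigr)^b\,\|f\|_{H^{s-1}(D)}
\end{equation*}
for suitable exponents $a,b>0$ arising from the standard contraction-mapping / Campanato-space argument. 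When $t=1$ the endpoint $H^2$ bound is available, so $s=t$ is admissible; this recovers the familiar convex-domain $H^2$ regularity.

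Second, for the dual problem: by Assumption A1 with $t_*=t$, the right-hand side $q\mapsto\overline{D_q F}(p,p_h)$ of \eqref{def:weak_dual} is a bounded linear functional on $H^{1-t}(D)\cap H^1_0(D)$, with operator norm dominated by $C_F(\xi)\in\{C_\phi(\xi),C_{\mathcal H}(\xi)\}$. Hence the dual data lies in $H^{t-1}(D)$ with norm at most $C_F(\xi)$, and the same deterministic elliptic-regularity estimate applied to \eqref{def:weak_dual} delivers $z(\cdot;\xi)\in H^{1+s}(D)$ for $s<t$ (and $s=t$ if $t=1$), with the corresponding bound
\begin{equation*}
\|z(\cdot;\xi)\|_{H^{1+s}(D)}\;\le\; C(s,D)\,k_{\min}(\xi)^{-a}\bigl(1+\|k(\cdot;\xi)\|_{C^t(\overline D)}\bigr)^b\, C_F(\xi).
\end{equation*}

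Third, to pass from these pathwise bounds to the $L^r_\mathbb P$ statement, I raise both estimates to the $r$-th power and apply Hölder's inequality to the products $k_{\min}^{-ar}\,(1+\|k\|_{C^t})^{br}\,(C_F)^r$. Since each factor lies in $L^{r_*}_\mathbb P$ (for $k$ and $k_{\min}^{-1}$ by hypothesis, for $C_\phi,C_{\mathcal H}$ by Assumption A1 with $q_*=r_*$), choosing Hölder exponents summing to $1$ forces $r<r_*$; when $r_*=\infty$, all factors are essentially bounded and the estimate holds for $r=r_*=\infty$.

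The main obstacle is the first, deterministic step: the explicit tracking of the dependence of the regularity constant on $\|k\|_{C^t(\overline D)}$ and $k_{\min}^{-1}$ on a convex polygonal domain for arbitrary $s<t$. I would not re-derive it here but invoke it from \cite{cst13,tsgu13}, where exactly this estimate (with explicit polynomial dependence on $\|k\|_{C^t}$ and $k_{\min}^{-1}$) is established; the remaining dual-problem and integrability steps are then entirely routine.
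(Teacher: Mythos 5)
The paper offers no proof of this proposition at all: it is imported directly from \cite{cst13,tsgu13}, and the argument in those references is exactly the one you outline (pathwise elliptic regularity on the convex polygonal domain with explicit polynomial dependence of the constant on $\|k(\cdot;\xi)\|_{C^t(\overline D)}$ and $k_{\mathrm{min}}(\xi)^{-1}$, the observation that Assumption A1 places the dual datum of \eqref{def:weak_dual} in $H^{t-1}(D)$ with norm $C_F(\xi)$, and H\"older's inequality to pass from the pathwise bound to $L^r_{\mathbb P}$). Your reconstruction is therefore essentially the same route; the one imprecision is the final H\"older step, where a product of factors each in $L^{r_*}_{\mathbb P}$ raised to powers $a,b$ lands in $L^{r_*/(a+b+1)}_{\mathbb P}$ rather than in every $L^r_{\mathbb P}$ with $r<r_*$ --- harmless here, since the proposition is only ever applied with $r_*=\infty$ (uniform case) or with the hypotheses valid for every finite $r_*$ (lognormal case, Lemma \ref{lem:gauss_exp}).
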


We will now show that the functionals $\theta$ and $\psi$ appearing in the prior expectations $Z$ and $Q$, respectively, satisfy the bounds in Assumption A2 provided $\phi$ and $\mathcal H$ satisfy Assumption A2, as well as the growth conditions in Assumption A3 below. \vspace{1.5ex}

\noindent
{\bf Assumption A3.} ({\em Boundedness})
Suppose there are constants $M_1,M_2 >0$ and $n_1,n_2 \in \mathbb{N}$, such that
\begin{equation}
\label{polygrowth}
|\phi(v)| \le M_1 \left(1+|v|^{n_1}_{H^1(D)}\right) \quad \text{and} \quad |\mathcal H(v)| \le M_2\left(1+|v|^{n_2}_{H^1(D)}\right), \quad \text{for all} \ \ v \in H^1_0(D).
\end{equation}

Recall the product and chain rules for Fr\'echet derivatives for functionals $F_1, F_2 : H^1_0(D) \rightarrow \mathbb R$ and a function $f : \mathbb R \rightarrow \mathbb R$:
\[
D_v (F_1 F_2) (w) = F_2 (w) D_v F_1(w) + F_1 (w) D_v F_2(w) \quad \text{and} \quad D_v (f \circ F_1)(w) = D_{D_v F_1(w)} f(F_1(w)).
\]

We then have the following result.

\begin{lemma}\label{lem:func_bound} Let Assumption A2 hold with $t_* \in [0,1]$ and $q_* \in [1,\infty]$, and suppose Assumption A3 holds. Then
\[
\overline{| D_v \theta |}(p,p_h) \leq C_\theta(\xi_{\underline J}) \|v\|_{H^{1-t_*}(D)}, \quad \text{ and } \quad \overline{| D_v \psi  |}(p,p_h) \leq C_{\psi}(\xi_{\underline J}) \|v\|_{H^{1-t_*}(D)},
\]
for all $v \in H^1_0(D)$ and for $\xi_{\underline J}$ $\mathbb P$-almost surely, where $C_\theta(\xi_{\underline J})$ and $C_{\psi}(\xi_{\underline J})$ are in $L^{r}_\mathbb{P}(\R^J)$, for all $r \in [1,q_*)$ If $k$ is uniformly elliptic, then the result holds also for $r = q_*$.
\end{lemma}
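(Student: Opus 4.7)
The plan is to compute the Fréchet derivatives of $\theta$ and $\psi$ explicitly using the product and chain rules, then bound them pointwise along the path $w_\lambda := p+\lambda(p_h-p)$ using Assumptions A1 and A2, and finally integrate in $\lambda$.

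For $\theta = \exp(-\Phi)$ with $\Phi(\zeta)=\frac{1}{2\sigma_\eta^2}|y-\mathcal{H}(\zeta)|^2$, the chain rule gives $D_v\theta(w) = -\exp(-\Phi(w))\cdot\frac{1}{\sigma_\eta^2}(\mathcal{H}(w)-y)\cdot D_v\mathcal{H}(w)$. Since $\exp(-\Phi)\le 1$ and $|D_v\mathcal{H}(w)|\le\sum_{i=1}^m|D_v\mathcal{H}_i(w)|$, we obtain the pointwise bound
\[
|D_v\theta(w)|\;\le\;\frac{1}{\sigma_\eta^2}\bigl(|\mathcal{H}(w)|+|y|\bigr)\sum_{i=1}^m|D_v\mathcal{H}_i(w)|.
\]
Specializing to $w=w_\lambda$ and using Assumption A2 together with the convexity bound $|w_\lambda|_{H^1}\le|p|_{H^1}+|p_h|_{H^1}\le 2\|f\|_{H^{-1}(D)}/k_\mathrm{min}(\xi)$ from Lemma \ref{lem:laxmil} and \eqref{eq:fe_h1}, one gets $|\mathcal{H}(w_\lambda)|+|y|\le \tilde M_2(\xi)$ for a constant that is polynomial in $k_\mathrm{min}(\xi)^{-1}$. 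Integrating in $\lambda$ and invoking Assumption A1 on each $\mathcal{H}_i$ gives
\[
\overline{|D_v\theta|}(p,p_h)\;\le\;\frac{m\,\tilde M_2(\xi)}{\sigma_\eta^2}\,C_{\mathcal{H}}(\xi)\,\|v\|_{H^{1-t_*}(D)},
\]
which is the desired bound with $C_\theta(\xi):=\frac{m}{\sigma_\eta^2}\tilde M_2(\xi)\,C_{\mathcal{H}}(\xi)$.

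For $\psi=\theta\phi$, the product rule gives $D_v\psi(w)=\phi(w)D_v\theta(w)+\theta(w)D_v\phi(w)$. Using $|\theta(w)|\le 1$, bounding $|\phi(w_\lambda)|$ by Assumption A2 (again via the $k_\mathrm{min}(\xi)^{-1}$ bound on $|w_\lambda|_{H^1}$), and then integrating in $\lambda$ using the already-established bound on $\overline{|D_v\theta|}(p,p_h)$ together with Assumption A1 for $\phi$, yields an analogous estimate with
\[
C_\psi(\xi)\;\le\;M_1\bigl(1+(2\|f\|_{H^{-1}(D)}/k_\mathrm{min}(\xi))^{n_1}\bigr)\,C_\theta(\xi)+C_\phi(\xi).
\]

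The remaining task is the integrability claim. Since $C_\phi,C_{\mathcal{H}}\in L^{q_*}_{\mathbb{P}}$ by Assumption A1 and $k_\mathrm{min}^{-1}\in L^r_{\mathbb{P}}$ for every finite $r$ (by Lemma \ref{lem:gauss_exp} in the Gaussian case and trivially in the uniform case), Hölder's inequality applied to the products above yields $C_\theta,C_\psi\in L^r_{\mathbb{P}}$ for every $r<q_*$, with the case $r=q_*$ recovered precisely when $k_\mathrm{min}$ is deterministic, i.e.\ when $k$ is uniformly elliptic. The main (minor) obstacle is bookkeeping the dependence on $k_\mathrm{min}(\xi)^{-1}$ through the polynomial growth exponents $n_1,n_2$ so that Hölder can be applied cleanly; once that is done, the conclusion follows.
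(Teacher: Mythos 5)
Your proposal is correct and follows essentially the same route as the paper's proof: compute $D_v\theta$ and $D_v\psi$ via the chain and product rules, use $\theta\le 1$, Assumption A2 and the a priori bound $|p_\lambda|_{H^1(D)}\le \|f\|_{H^{-1}(D)}/k_{\min}(\xi)$ to control the non-derivative factors, integrate over $\lambda$ and invoke Assumption A1, then obtain the $L^r_{\mathbb P}$ integrability by H\"older's inequality from $C_\phi, C_{\mathcal H}\in L^{q_*}_{\mathbb P}$ and $k_{\min}^{-1}\in L^q_{\mathbb P}$ for all finite $q$ (or $L^\infty_{\mathbb P}$ in the uniformly elliptic case). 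The only differences are cosmetic (a factor of $2$ in your bound on $|w_\lambda|_{H^1(D)}$ and bundling the components of $\mathcal H$), so no further comparison is needed.
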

\begin{proof} First, we use the chain rule {and product rule} for Fr\'echet derivatives to obtain
\[
D_v \theta(w) = D_v \left(\exp\left[-\frac{1}{2 \sigma_\eta^2} \sum_{i=1}^m (y_i - \mathcal H_i(w))^2\right]\right)
= - \theta(w) \, \frac{1}{\sigma_\eta^2} \, \sum_{i=1}^m \, 
D_v \mathcal H_i(w)(y_i - \mathcal H_i(w)).
\]
Denoting $p_\lambda = p + \lambda(p_h-p)$, we then have
\[
\overline{|D_v \theta|}(p,p_h) =  \int_0^1 \left| D_v \theta(p_\lambda) \right| \dlambda \leq \int_0^1 \theta(p_\lambda) \, \, \frac{1}{\sigma_\eta^2} \sum_{i=1}^m \left| D_v \mathcal H_i(p_\lambda)\right| |y_i - \mathcal H_i(p_\lambda))| \dlambda.
\]
By the definition of $\theta$ in \eqref{eq:def_theta}, we have $\theta(p_\lambda) \leq 1$, for all $\lambda \in [0,1]$. By Assumption A3, it follows that
\[
|y_i - \mathcal H_i(p_\lambda))| \leq C (1+ |p_\lambda|^{n_2}_{H^1(D)}) \leq C ( 1+ \|f\|^{n_2}_{H^{-1}(D)} k^{-n_2}_\mathrm{min}(\xi_{\underline J})),
\]
for some (generic) constant $C$ independent of the mesh size $h$ and random parameter $\xi_{\underline J}$. It then follows that
\[
\overline{|D_v \theta|}(p,p_h) \leq \frac{C}{\sigma_\eta^2}  ( 1+ \|f\|^{n_2}_{H^{-1}(D)} k^{-n_2}_\mathrm{min}(\xi_{\underline J})) \, \sum_{i=1}^m \overline{|D_v \mathcal H_i |}(p,p_h).
\]
With $C_\theta(\xi_{\underline J}) = \frac{m \, C}{\sigma_\eta^2} \, ( 1+ \|f\|^{n_2}_{H^{-1}(D)} k^{-n_2}_\mathrm{min}(\xi_{\underline J})) \, C_\mathcal{H}(\xi_{\underline J})$, it then follows from Assumption A2 that
\[
\overline{|D_v \theta|}(p,p_h) \leq C_\theta(\xi_{\underline J}) \|v\|_{H^{1-t_*}(D)}.
\]

Next, using the product rule for Fr\'echet derivatives, together with the result just proved, we have 
\[
D_v \psi(w) = \theta(w) \, D_v \phi(w) - \phi(w) \, \theta(w) \, \frac{1}{\sigma_\eta^2} \,\sum_{i=1}^m  D_v \mathcal H_i(w) (\delta_i - \mathcal H_i(w)). 
\]
With $p_\lambda $ as before, it then follows that
\begin{align*}
\overline{|D_v \psi|}(p,p_h) &=  \int_0^1 \left| D_v \psi(p_\lambda) \right| \dlambda  \\
&\leq \int_0^1 \theta(p_\lambda) \, \left[|D_v \phi(p_\lambda)| + \phi(p_\lambda) \, \frac{1}{\sigma_\eta^2}  \sum_{i=1}^m \,\left| D_v \mathcal H_i(p_\lambda)\right| |\delta_i - \mathcal H_i(p_\lambda)| \right]\dlambda. 
\end{align*}
Now $\theta(p_\lambda) \leq 1$, for all $\lambda \in [0,1]$. By Assumption A3, it follows that
\[
|\phi(p_\lambda)| \; \max_{i \in \{1, \dots, m\}} |y_i - \mathcal H_i(p_\lambda))| \leq C (1+ |p_\lambda|^{n_2}_{H^1(D)})^2 \leq C ( 1+ \|f\|^{2 n_2}_{H^{-1}(D)} k^{- 2 n_2}_\mathrm{min}(\xi_{\underline J})),
\]
for some (generic) constant $C$ independent of the mesh size $h$ and random parameter $\xi_{\underline J}$. It then follows that  
\[
\overline{|D_v \psi|}(p,p_h) \leq \overline{|D_v \phi|}(p,p_h) + \frac{C}{\sigma_\eta^2} ( 1+ \|f\|^{2 n_2}_{H^{-1}(D)} k^{- 2 n_2}_\mathrm{min}(\xi_{\underline J})) \, \sum_{i=1}^m \overline{|D_v \mathcal H_i|}(p,p_h).
\]
With $C_{\psi}(\xi_{\underline J}) = C_{\phi}(\xi_{\underline J}) + \frac{m \, C}{\sigma_\eta^2} ( 1+ \|f\|^{2 n_2}_{H^{-1}(D)} k^{- 2 n_2}_\mathrm{min}(\xi_{\underline J})) \, C_\mathcal{H}(\xi_{\underline J})$, it then follows that
\[
\overline{|D_v \psi|}(p,p_h) \leq C_\psi(\xi_{\underline J}) \|v\|_{H^{1-t_*}(D)}.
\]

{Finally, recall that by Lemmas \ref{lem:unif_bound} and \ref{lem:gauss_exp}, we have $k^{-1}_\mathrm{min}(\xi_{\underline J}) \in L_\mathbb{P}^{\infty}(\R^J)$ if $k$ is uniformly elliptic, and $k^{-1}_\mathrm{min}(\xi_{\underline J}) \in L_\mathbb{P}^{q}(\R^J)$, for any $1 \leq q < \infty$, otherwise.} Hence, it follows from Assumption A2,  together with H\"older's and Minkowski's inequalities, that $C_\theta(\xi_{\underline J})$ and $C_\psi(\xi_{\underline J})$ are in $L^{r}_\mathbb{P}(\R^J)$, for all $r \in [1,q_*)$. If $k$ is uniformly elliptic, we can also set $r = q_*$.
\end{proof}

Bounds on the finite element errors $|\theta(p) - \theta(p_h)|$ and $|\psi(p) - \psi(p_h)|$ 
now follow directly {from Proposition} \ref{thm:spat_reg} and Lemma \ref{lem:func_bound}.

\begin{theorem}\label{thm:fe_post} Under the assumptions of {Proposition} \ref{thm:spat_reg} and Lemma \ref{lem:func_bound} with $t \in (0,1]$ and $r_* \in [1, \infty]$, we have
\[
\|\theta(p) - \theta(p_h)\|_{L^r_\mathbb{P}(\R^J)} \leq C_{k,f,\theta,D} \; h^{2s}, \quad \text{and} \quad \|\psi(p) - \psi(p_h)\|_{L^r_\mathbb{P}(\R^J)} \leq C_{k,f,\psi,D} \; h^{2s},
\]
for any $s<t$ and $r<r_*$. The constants $C_{k,f,\theta,D}$ and $C_{k,f,\psi,D}$ are independent of $h$. If $r_* = \infty$, we can also bound the $L_\mathbb{P}^\infty$ norms. If $t=1$, we can set $s=1$.
\end{theorem}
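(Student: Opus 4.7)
The idea is to view $\theta$ and $\psi$ as nonlinear functionals of the PDE solution, verify that they satisfy the same differentiability and integrability hypotheses used in the duality argument sketched in the paragraph before Assumption A1, and then invoke the associated Aubin--Nitsche-style finite element error bound.

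Concretely, Lemma \ref{lem:func_bound} shows that both $\theta$ and $\psi$ satisfy Assumption A1 with the same exponent $t_* = t$ as $\phi$ and $\mathcal H$, and with constants $C_\theta, C_\psi \in L^{q}_\mathbb{P}(\R^\infty)$ for every $q < r_*$ (and $q = r_*$ when $k$ is uniformly elliptic). With this verified, Proposition \ref{thm:spat_reg} applied to the dual problem \eqref{def:weak_dual} with $F = \theta$ or $F = \psi$ yields dual solutions $z_F \in L^{r'}_\mathbb{P}(\R^\infty, H^{1+s}(D))$ for any $s < t$ and $r' < r_*$, while the same proposition applied to the primal problem gives $p \in L^{r'}_\mathbb{P}(\R^\infty, H^{1+s}(D))$.

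The deterministic step is the bound already derived in the paragraph before Assumption A1,
\[
|F(p) - F(p_h)| \leq k_\mathrm{max}(\xi) \, |p - p_h|_{H^1(D)} \, |z_F - z_{F,h}|_{H^1(D)},
\]
obtained from the fundamental theorem of calculus for $F$, the definition of the dual problem, and Galerkin orthogonality of $p - p_h$ in $V_h$. C\'ea's lemma together with standard piecewise-linear interpolation on the shape-regular family $\{\mathcal T_h\}$ bounds each $H^1$-seminorm by $C\,(k_\mathrm{max}/k_\mathrm{min})(\xi)\,h^{s}$ times the corresponding $H^{1+s}(D)$ norm, producing the pointwise (in $\xi$) estimate
\[
|F(p) - F(p_h)| \leq C\,h^{2s} \, \frac{k_\mathrm{max}(\xi)^3}{k_\mathrm{min}(\xi)^2} \, \|p(\cdot;\xi)\|_{H^{1+s}(D)} \, \|z_F(\cdot;\xi)\|_{H^{1+s}(D)}.
\]

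The final step is to take the $L^r_\mathbb{P}$ norm for $r < r_*$ and apply a generalised H\"older inequality, splitting the right-hand side between the $k$-dependent prefactor, which by Lemmas \ref{lem:unif_bound} and \ref{lem:gauss_exp} lies in every $L^q_\mathbb{P}$ with $q < \infty$, and the two random $H^{1+s}$-norm factors, each in $L^{r'}_\mathbb{P}$ for any $r' < r_*$ by Proposition \ref{thm:spat_reg}. The same reasoning applied to $\psi$ finishes the proof. The main obstacle is the bookkeeping of H\"older exponents for the product of two random $H^{1+s}$-norms: the cases $r_* = \infty$ and $k$ uniformly elliptic are immediate, while for finite $r_*$ one chooses $r'$ strictly between $r$ and $r_*$ and absorbs the slack into the arbitrarily integrable $k$-prefactor by a short limiting argument.
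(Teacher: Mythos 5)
Your proof follows exactly the route the paper intends: the paper gives no explicit proof of Theorem \ref{thm:fe_post}, stating only that the bounds ``follow directly'' from Proposition \ref{thm:spat_reg} and Lemma \ref{lem:func_bound} via the duality estimate $|F(p)-F(p_h)| \le k_\mathrm{max}(\xi)\,|p-p_h|_{H^1(D)}\,|z-z_h|_{H^1(D)}$ derived before Assumption A1, combined with the standard $O(h^s)$ finite element approximation of $p$ and $z$ in $H^{1+s}(D)$ and a H\"older argument in $\xi$ --- and your write-up supplies precisely these steps. The one caveat is your final bookkeeping: for a fixed finite $r_*$, a product of two factors each lying only in $L^{r'}_\mathbb{P}$ for $r'<r_*$ lands in $L^{r'/2}_\mathbb{P}$, and this loss cannot be ``absorbed into the $k$-prefactor'' as you suggest; it is immaterial for the settings actually treated (uniform priors give $r_*=\infty$, and in the lognormal case Lemma \ref{lem:gauss_exp} lets one take $r_*$ arbitrarily large), which is presumably why the paper glosses over it as well.
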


{ Proposition \ref{thm:spat_reg}, Lemma \ref{lem:func_bound} and Theorem \ref{thm:fe_post} hold, without any additional assumptions, also for infinite-dimensional parameter vectors $\xi \in \R^\infty$ \cite{cst13,tsgu13}.}

\section{Sampling methods}\label{sec:samp}

In this section, we briefly recall the main ideas behind Monte Carlo (MC), Multilevel Monte Carlo (MLMC) and quasi-Monte Carlo (QMC) estimators to compute the prior expectation 
$Q_h = \EE_{\mu_0}[\psi(p_h)]$. The estimators for $Z_h = \EE_{\mu_0}[\theta(p_h)]$ are defined analogously. We also provide bounds on the sampling error of the estimators, which will become useful for bounding the mean square error in Section \ref{sec:mse}. For more details, we refer the reader to \cite{robert_casella,cgst11,kss12,gknsss15}.

\subsection{Monte Carlo estimators}

The standard Monte Carlo estimator for $Q_h$ is\vspace{-0.5ex}
\begin{equation}\label{eq:mc_est}
\qquad \widehat Q_{h,N}^\mathrm{MC} = \frac{1}{N} \sum_{i=1}^N \psi(p_h(\cdot ; \xi_{\underline J}^{(i)})),
\end{equation}
where $\xi_{\underline J}^{(i)}$ is the $i$th sample of $\xi_{\underline J}$ from the distribution $\mathbb P$, and $N$ independent samples are computed in total. The estimator \eqref{eq:mc_est} is an unbiased estimator of $Q_h$, with variance 
\begin{equation}\label{eq:mc_var}
\VV[\widehat Q_{h,N}^{MC}] = \frac{\VV[\psi(p_h)]}{N}.
\end{equation}

\subsection{Multilevel Monte Carlo estimators}

The main idea of multilevel Monte Carlo estimation is simple. Linearity of the expectation operator implies that\vspace{-1ex} 
\[
\EE_{\mu_0}[\psi(p_h)] = \EE_{\mu_0}[\psi(p_{h_0})] + \sum_{\ell=1}^L \EE_{\mu_0}[\psi(p_{h_\ell}) - \psi(p_{h_{\ell-1}})],
\]
where $\{h_\ell\}_{\ell=0}^L$ are the mesh widths of a sequence of increasingly fine triangulations $\mathcal T_{h_\ell}$ with $h_L = h$, the finest mesh width, and $k_1 \leq h_{\ell-1}/h_\ell \leq k_2$, for all $\ell=1, \dots, L$ and some $1 < k_1 \leq k_2 < \infty$. The multilevel idea is now to estimate each of the terms independently using a Monte Carlo estimator.
Setting for convenience $Y_0^\psi = \psi(p_{h_0})$, and $Y_\ell^\psi = \psi(p_{h_\ell}) - \psi(p_{h_{\ell-1}})$, for $\ell=1,\dots, L$, we define the MLMC estimator as
\begin{equation}\label{eq:mlmc_est}
\widehat Q_{h,\{N_{\ell}\}}^\mathrm{ML} = \sum_{\ell=0}^L \widehat Y_{\ell, N_\ell}^{\psi,\mathrm{MC}} = \sum_{\ell=0}^L \frac{1}{N_\ell} \sum_{i=1}^{N_\ell} Y_\ell^\psi(\cdot ; \xi_{\underline J}^{(i, \ell)}),
\end{equation}
where importantly the quantity $Y_\ell^\psi(\cdot ; \xi_{\underline J}^{(i,\ell)})$ uses the same sample $\xi_{\underline J}^{(i,\ell)}$ on both meshes. 
The estimator \eqref{eq:mlmc_est} is an unbiased estimator of $Q_h$, with variance
\begin{equation}\label{eq:mlmc_var}
\VV[\widehat Q_{h,\{N_{\ell}\}}^\mathrm{ML}] = \sum_{\ell=0}^L \frac{\VV[Y_\ell^\psi]}{N_\ell} \leq C \sum_{\ell=0}^L \frac{h_\ell^{4s}}{N_\ell},
\end{equation}
where the last inequality follows from Theorem \ref{thm:fe_post}, with a constant $C$ independent of $\{h_\ell\}_{\ell=0}^L$ and with $0 \le s < t \le 1$, as defined in {Proposition} \ref{thm:spat_reg}.

{When defining the MLMC estimator \eqref{eq:mlmc_est}, one can in fact also use level-dependent truncation levels $J_\ell$. This approach was analysed in \cite{tsgu13}, and can lead to further significant gains in terms of computational cost.}

\subsection{Quasi-Monte Carlo estimators}\label{ssec:samp_qmc}
{Quasi-Monte Carlo methods are classically formulated as quadrature rules over the unit cube $[0,1]^J$, for some $J \in \mathbb N$.}
Treating $\xi_{\underline J}$ as a deterministic parameter vector distributed according to the product uniform or Gaussian measure, respectively,
\begin{equation}\label{eq:int_qmc}
\EE_{\mu_0}[\psi(p_h)] = \int_{[0,1]^J} \psi(p_h(\cdot; (\Phi^{-1}_J(v)))) \mathrm{d}v,
\end{equation}
where $\xi_{\underline J} = \Phi_J^{-1}(v)$ denotes the inverse cumulative normal applied to each entry of $v$ in the Gaussian case. In the uniform case, $\Phi_J^{-1}$ is the simple change of variables mapping $v_j$ to $2v_j-1$. We will use a randomly shifted lattice rule to approximate the integral \eqref{eq:int_qmc}. This takes the form 
\begin{equation}\label{eq:qmc_est}
\widehat Q_{h,N}^\mathrm{QMC} = \frac{1}{N} \sum_{i=1}^N \psi\Big(p_h\big(\cdot ; \tilde \xi_{\underline J}^{(i)}\big)\Big), \quad \text{where} \ \ \tilde \xi_{\underline J}^{(i)} := \Phi^{-1}_J\bigg(\text{frac}\Big(\frac{i z}{N} + \Delta\Big)\bigg),
\end{equation}
$z \in \{1,\ldots,N-1\}^J$ is a {\em generating vector}, $\Delta$ is a uniformly distributed {\em random shift} on $[0,1]^J$, and "frac" denotes the fractional part function, applied component-wise. To ensure that every one-dimensional projection of the lattice rule has $N$ distinct values we furthermore assume that each component $z_j$ of $z$ satisfies $\text{gcd}(z_j,N) = 1$
(cf.~\cite{dks13}).

The variance of the QMC estimator \eqref{eq:qmc_est} is given by
\begin{equation}\label{eq:qmc_var}
\VV[\widehat Q_{h,N}^\mathrm{QMC}] = \EE_\Delta [ (\EE_{\mu_0}[\psi(p_h)] - \widehat Q_{h,N}^\mathrm{QMC} )^2 ].
\end{equation}
To bound it, we make the following assumption on the integrand $\psi(p_h)$.

\noindent
{\bf Assumption A4.} Let $c_1 > 0$ be a constant independent of $J$ and let $b_j := \gamma_j \|\phi_j\|_{C^0(\overline D)}$, for $j\in \mathbb{N}$. We assume that, for any multi-index $\boldsymbol{\nu} \in \{0,1\}^J$ with $|\boldsymbol{\nu}| = \sum_{j\le J} \nu_j$\,,
\[
\left| \frac{\partial^{|\boldsymbol{\nu}|}\psi(p_h)}{\partial \xi_{\underline J}^{\boldsymbol{\nu}}} \right| \le C_{k,f,\psi,D} \frac{c_1^{|\boldsymbol{\nu}|} |\boldsymbol{\nu}|!}{k_\mathrm{min}(\xi_{\underline J})} \prod_{j = 1}^{J} b_j^{\nu_j}\,.\vspace{1ex}
\]

For linear functionals $\psi$ on $H^1_0(D)$, this has been proved in \cite{kss12} and in \cite{gknsss15} for the uniform and the Gaussian cases, respectively.
In both cases, we can choose $c_1 = 1/\ln 2$.
However, in the Bayesian setting, both $\psi$ and $\theta$ are inherently non-linear functionals of $p$. Nevertheless, if $\phi$ and $\mathcal{H}$ are linear functionals of $p$, and $k$ is uniformly elliptic, Assumption A4 can be proved by using the classical Fa\`{a} di Bruno formula \cite{cs96}, a multidimensional version of the chain rule. A proof for $\theta$ in the case $m=1$ can be found in Appendix \ref{sec:appendix}. We omit the proof for $\psi$ or for $m>1$. A proof for general analytic functionals $\phi$ and $\mathcal{H}$ of $p$ would be even more technical and require the use of generalisations of Fa\`{a} di Bruno's formula to Fr\'echet derivatives.
{For this reason, we simply work under {Assumption A4.}}

{In the case of uniform priors, Assumption A4 was proven to hold in \cite{dggs16} using arguments from complex analysis and the holomorphy of $p_h$ as a function of $\xi_{\underline J }$.} 

\begin{lemma}\label{lem:qmc_var_linear} Suppose Assumption A4 holds and the sequence $\{b_j\}_{j=1}^\infty$ is in $l^q(\R^\infty)$, for some $q \in (0,1]$. Then, a randomly shifted lattice rule can be constructed via a component-by-component algorithm in {$\mathcal{O}(J N \log N)$} cost, such that
\[
\VV[\widehat Q_{h,N}^\mathrm{QMC}] \leq \left\{ 
\begin{array}{ll}
         C_{\psi,q,\delta} \; N^{-1/\delta}, & \mbox{if $q \in (0,2/3]$},\\
        C_{\psi,q}\; N^{-(1/q - 1/2)}, & \mbox{if $q \in (2/3,1)$},\end{array}
\right.
\]
for any $\delta \in (1/2,1]$, independently of $J$. For $q=1$ and under further assumptions given in \cite[Theorem 20]{gknsss15} and \cite[Theorem 6.4]{kss12}, we have $\VV[\widehat Q_{h,N}^\mathrm{QMC}] \leq C_{\psi}\, N^{- 1/2}$.
\end{lemma}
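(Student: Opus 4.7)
The plan is to invoke the CBC constructions of randomly shifted lattice rules in weighted function spaces from \cite{kss12} (uniform case) and \cite{gknsss15} (Gaussian case), with Assumption A3 providing the norm bound on the integrand. Working in a weighted (unanchored) Sobolev space with product density $\rho$ and positive weights $\{\gamma_\setu\}_{\setu\subseteq \{1,\dots,J\}}$, equipped with the squared norm
\[
\|f\|_{\boldsymbol\gamma}^2 \;=\; \sum_{\setu} \frac{1}{\gamma_\setu}\int (\partial^\setu f(\xi))^2\,\rho(\xi)\,d\xi,
\]
the standard CBC theorem guarantees that for any $\lambda\in(1/2,1]$ a generating vector $z$ is produced in $\mathcal{O}(JN\log N)$ work such that
\[
\VV[\widehat Q_{h,N}^\mathrm{QMC}] \;\le\; \|f\|_{\boldsymbol\gamma}^2\,[C(\lambda)]^{1/\lambda}\,N^{-1/\lambda}\Bigl(\sum_{\emptyset\neq\setu}\gamma_\setu^\lambda \,\rho(\lambda)^{|\setu|}\Bigr)^{1/\lambda},
\]
with explicit constants $C(\lambda), \rho(\lambda)$. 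In the Gaussian setting, the weight function appearing inside the integral defining $\|\cdot\|_{\boldsymbol\gamma}$ must be chosen as in \cite{gknsss15} so that the polynomially growing derivative bounds of Assumption A3 remain square-integrable against the standard Gaussian density.

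Next, I would apply Assumption A3 together with the $L^r_\mathbb{P}$-integrability of $k_{\min}^{-1}$ (Lemmas \ref{lem:unif_bound} and \ref{lem:gauss_exp}) to obtain
\[
\|\psi(p_h^J)\|_{\boldsymbol\gamma}^2 \;\le\; \tilde C^{\,2}\sum_{\setu}\frac{c_1^{2|\setu|}(|\setu|!)^{2}}{\gamma_\setu}\prod_{j\in\setu}b_j^{2},
\]
with $\tilde C$ independent of $J$. Optimising the product of the norm bound and the CBC weight sum pointwise in $\setu$ leads to the POD (product and order-dependent) weights $\gamma_\setu = (|\setu|!)^{2/(1+\lambda)}\prod_{j\in\setu} b_j^{2/(1+\lambda)}$. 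Expanding by cardinality and applying the elementary symmetric polynomial bound $\sum_{|\setu|=m}\prod_{j\in\setu}x_j\le(m!)^{-1}(\sum_j x_j)^m$, both sums collapse to series of the form $\sum_{m\ge 0}(m!)^{(\lambda-1)/(1+\lambda)}\big(\sum_j b_j^{2\lambda/(1+\lambda)}\big)^{m}$, which are bounded uniformly in $J$ whenever $\lambda<1$ and $2\lambda/(1+\lambda)\ge q$, i.e.\ $\lambda\ge q/(2-q)$.

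Finally, inserting the smallest admissible $\lambda$ into the rate $N^{-1/\lambda}$ produces the three regimes of the lemma: for $q\in(0,2/3]$ the constraint $\lambda\ge q/(2-q)\le 1/2$ is automatic, so any $\lambda=\delta\in(1/2,1]$ is admissible, yielding the bound $N^{-1/\delta}$; for $q\in(2/3,1)$ the minimal admissible $\lambda=q/(2-q)$ delivers the stated rate; and the endpoint $q=1$ falls outside the $\lambda<1$ regime and requires the refined SPOD-weight analyses of \cite[Theorem 20]{gknsss15} and \cite[Theorem 6.4]{kss12} under their additional hypotheses. The main obstacle is the factorial $|\setu|!$ in Assumption A3, which is absent in the linear-functional setting and forces genuinely order-dependent weights rather than plain product weights; verifying that $\sum_\setu \gamma_\setu^\lambda$ remains finite uniformly in $J$ under the POD choice above is precisely where the $\ell^q$-summability of $\{b_j\}$ is consumed. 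In the Gaussian case, an additional subtlety is matching the Sobolev weight function to the Gaussian tails, addressed by the specific construction in \cite{gknsss15}.
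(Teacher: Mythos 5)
Your proposal is correct and follows essentially the same route as the paper, whose entire proof is a one-sentence deferral to the CBC/POD-weight arguments of \cite{kss12,gknsss15} with the weights adjusted when $c_1 \neq 1/\ln 2$; you have simply written out the details of that cited argument (the weighted-Sobolev norm bound from Assumption A3, the POD-weight optimisation, and the summability argument consuming $\{b_j\}\in\ell^q$), including the correct caveats for the Gaussian weight functions and the $q=1$ endpoint. The only quibble is that for $q\in(2/3,1)$ your choice $\lambda=q/(2-q)$ actually yields the variance bound $N^{-(2/q-1)}$, which is stronger than, and hence implies, the stated $N^{-(1/q-1/2)}$.
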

\begin{proof}
The proof follows those in \cite{kss12,gknsss15} with suitable changes to the product and order dependent (POD) weights if $c_1 \not= 1/\ln 2$.
\end{proof}
It is even possible to combine quasi-Monte Carlo sampling and multilevel estimation and the gains are complementary \cite{kss15,kssu15}, but we will not include these estimators or their analysis here.

\section{Mean Square Error and Computational Complexity}\label{sec:mse}

We will now use the results from Sections \ref{sec:fe} and \ref{sec:samp} to bound the mean square error (MSE) of estimators for the ratio $Q/Z$.
To this end, let us denote by $\widehat Z_{h}$ and $\widehat Q_{h}$ one of the Monte Carlo type estimators discussed in Section \ref{sec:samp} for $Z_h$ and $Q_h$, respectively. 
Let us define the mean square error
\begin{equation}\label{eq:rat_mse}
e\left(\widehat Q_{h} \Big/ \widehat Z_{h}\right)^2 = \EE\left[ \bigg( \frac{Q}{Z} - \frac{\widehat Q_{h}}{\widehat Z_{h}}\bigg)^2 \right] .
\end{equation}
For MC and MLMC estimators, the expectation in the expression \eqref{eq:rat_mse} above is with respect to the prior measure $\mu_0$ on $X$. For QMC estimators, it is with respect to the random shift $\Delta$. 

Rearranging the mean square error and using the triangle inequality, we have
\begin{align}
e\left(\widehat Q_{h} \Big/ \widehat Z_{h}\right)^2  
& = \frac{1}{Z^2} \, \EE\left[ \Big( Q - \widehat Q_{h} + (\widehat Q_{h} / \widehat Z_{h})\left(\widehat Z_{h} - Z\right) \Big)^2\right] \nonumber \\[1ex]
\label{eq:rat_mse2}
&\le \frac{2}{Z^2} \bigg(\EE\left[ ( \widehat Q_{h} - Q )^2\right] \, + \, \EE\left[ (\widehat Q_{h} / \widehat Z_{h})^{2} ( \widehat Z_{h} - Z)^2 \right] \bigg). 
\end{align}
Our further analysis depends on the integrability of $\widehat Q_{h} \big/ \widehat Z_{h}$, and we thus consider separately the cases of uniformly and non-uniformly elliptic coefficients $k$.

\subsection{Uniformly elliptic case}\label{ssec:mse_unif}

In the case where the coefficient $k$ is uniformly elliptic, we have the following result on the integrability of $\widehat Q_{h} / \widehat Z_{h}$. The assumptions on $\phi$ and $\mathcal H$ are more general than Assumption A3, and allow for very general non-linear growth. Since, in general, $\widehat Z_{h,\{N_{\ell}\}}^\mathrm{ML}$ could be negative, we require stronger assumptions in the case of multilevel Monte Carlo estimators. { In particular, we require the assumptions of Theorem \ref{thm:fe_post} to hold with $r=\infty$, which means that the coefficient $k$ needs to be uniformly bounded as well as uniformly elliptic. The analysis of MLMC in Lemma \ref{lem:rat_linfty} below therefore does not apply in the case of Gaussian priors.}

\begin{lemma}\label{lem:rat_linfty} Suppose $k$ is uniformly elliptic and there are two constants $M_1,M_2>0$, such that 
\[
|\phi(v)|\le M_1 \quad  \text{and} \quad |\mathcal H(v)|\le M_2, \quad \text{for all} \ \ v \in H^{1}_0(D) \ \ \text{with} \ \ |v|_{H^1(D)} \le \|f\|_{H^{-1}(D)} / k_{\min}.
\]
Then $\widehat Q_{h,N}^\mathrm{MC} \big/ \widehat Z_{h,N}^\mathrm{MC} \in L^\infty_\mathbb{P}(\mathbb R^J)$, and $\widehat Q_{h,N}^\mathrm{QMC} \big/\widehat Z_{h,N}^\mathrm{QMC} \in L^\infty_\Delta([0,1]^J)$, with $L^\infty$-norms bounded independently of $h$ and $N$. 

If in addition $h_0$ is sufficiently small and the assumptions of Theorem \ref{thm:fe_post} hold with $r=\infty$, we also have $\widehat Q_{h,\{N_{\ell}\}}^\mathrm{ML} \big/ \widehat Z_{h,\{N_{\ell}\}}^\mathrm{ML} \in L^\infty_\mathbb{P}(\mathbb R^J)$, with $L^\infty$-norm bounded independently of $\{h_\ell\}$, $\{N_\ell\}$ and $L$.
\end{lemma}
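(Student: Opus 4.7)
The plan is to exploit uniform ellipticity to get a deterministic (not just integrable) bound on $|p_h|_{H^1(D)}$, and then propagate this through the exponential structure of $\theta$ to obtain a uniform positive \emph{lower} bound on $\theta(p_h)$. Concretely, by \eqref{eq:fe_h1} and uniform ellipticity of $k$ we have $|p_h(\cdot;\xi)|_{H^1(D)} \le \|f\|_{H^{-1}(D)}/k_{\mathrm{min}}$ for all $\xi$, $h$ and $J$. The hypotheses on $\phi$ and $\mathcal H$ then give $|\phi(p_h(\cdot;\xi))| \le M_1$ and $|\mathcal H(p_h(\cdot;\xi))| \le M_2$ deterministically, hence $\Phi(p_h(\cdot;\xi)) \le \Phi_{\mathrm{max}}$ for some constant $\Phi_{\mathrm{max}}$ depending only on $M_2$, $|y|$ and $\sigma_\eta^2$. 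This in turn yields $\theta(p_h(\cdot;\xi)) \in [\theta_{\mathrm{min}},1]$ with $\theta_{\mathrm{min}} := e^{-\Phi_{\mathrm{max}}} > 0$, and $|\psi(p_h(\cdot;\xi))| \le M_1$, all uniformly in $\xi$, $h$, $J$.

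For the MC and QMC statements this is essentially all that is needed. The estimator $\widehat Z_{h,N}^{\mathrm{MC}}$ is an arithmetic mean of values $\theta(p_h(\cdot;\xi^{(i)})) \in [\theta_{\mathrm{min}},1]$, and the analogous statement holds for $\widehat Z_{h,N}^{\mathrm{QMC}}$ with $\xi^{(i)}$ replaced by the shifted lattice points $\tilde\xi_J^{(i)}$. In both cases the denominator lies pointwise in $[\theta_{\mathrm{min}},1]$, so the ratio can be rewritten as a convex combination
\[
\frac{\widehat Q}{\widehat Z} = \sum_{i=1}^N \frac{\theta(p_h(\cdot;\xi^{(i)}))}{\sum_{j=1}^N \theta(p_h(\cdot;\xi^{(j)}))} \, \phi(p_h(\cdot;\xi^{(i)})),
\]
whence $|\widehat Q/\widehat Z| \le M_1$ pointwise in $\xi$ (respectively in $\Delta$), with a bound independent of $h$, $J$ and $N$.

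The multilevel case is the one that requires real work, because the telescoping sum $\widehat Z_{h,\{N_\ell\}}^{\mathrm{ML}} = \widehat Y_{0,N_0}^{\theta,\mathrm{MC}} + \sum_{\ell=1}^L \widehat Y_{\ell,N_\ell}^{\theta,\mathrm{MC}}$ involves \emph{signed} correction terms and could, a priori, vanish or change sign. I would split the estimator into its level-$0$ term and the remaining corrections. The level-$0$ term is an MC average, hence bounded below by $\theta_{\mathrm{min}}$ pointwise by the first paragraph. For $\ell \ge 1$, I would invoke Theorem \ref{thm:fe_post} with $r=\infty$ (available precisely under the extra hypotheses stated in the lemma and the uniform ellipticity of $k$) to obtain $\|\theta(p_{h_\ell}) - \theta(p_{h_{\ell-1}})\|_{L^\infty_{\mathbb P}} \le C h_\ell^{2s}$, so that each $|\widehat Y_{\ell,N_\ell}^{\theta,\mathrm{MC}}|$ is bounded pointwise by $C h_\ell^{2s}$. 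Summing over $\ell \ge 1$ using the geometric mesh ratio $h_{\ell-1}/h_\ell \ge k_1 > 1$ produces a bound proportional to $h_0^{2s}$, and choosing $h_0$ small enough makes it strictly smaller than $\theta_{\mathrm{min}}/2$. Then $\widehat Z^{\mathrm{ML}} \ge \theta_{\mathrm{min}}/2 > 0$ pointwise. The exact same argument applied to $\psi$ (whose $L^\infty$ bound $M_1$ plays the role of $\theta_{\mathrm{min}}$'s upper counterpart) shows $|\widehat Q^{\mathrm{ML}}|$ is bounded pointwise, and dividing gives the stated $L^\infty$ bound uniformly in $\{h_\ell\}$, $\{N_\ell\}$ and $L$.

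The main obstacle is precisely this pointwise lower bound on $\widehat Z^{\mathrm{ML}}$: unlike for MC and QMC, the convex combination trick is unavailable, and one has to pay for positivity of the denominator with an $L^\infty$ (rather than $L^r$) finite element error bound and a smallness condition on the coarsest mesh. This explains why the MLMC part of the lemma is stated under the stronger hypotheses.
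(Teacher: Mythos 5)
Your proof is correct and follows essentially the same route as the paper: deterministic bounds on $\phi(p_h)$ and $\theta(p_h)$ coming from uniform ellipticity handle the MC and QMC cases, while the MLMC denominator is kept positive via the $L^\infty$ finite element error bound of Theorem \ref{thm:fe_post} together with a smallness condition on $h_0$. The only cosmetic difference is your convex-combination rewriting of $\widehat Q/\widehat Z$ for MC/QMC, which gives the slightly sharper bound $M_1$ instead of the paper's quotient of the separate upper and lower bounds.
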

\begin{proof}
Using the definition of $\widehat Q_{h,N}^\mathrm{MC}$ in \eqref{eq:mc_est}, as well as the bound in \eqref{eq:fe_h1} and the fact that $\theta(v) \leq 1$, for all $v \in H^1_0(D)$, it follows that
\[
\qquad |\widehat Q_{h,N}^\mathrm{MC}| = \left| \frac{1}{N} \sum_{i=1}^N \phi(p_h(\cdot ; \xi_{\underline J}^{(i)})) \theta(p_h(\cdot ; \xi_{\underline J}^{(i)})) \right|\leq M_1
\]
and
\[
\widehat Z_{h,N}^\mathrm{MC} = \frac{1}{N} \sum_{i=1}^N \theta(p_h(\cdot ; \xi_{\underline J}^{(i)})) \geq \exp\left(-\frac{|y|^2 + m M_2^2}{2\sigma_\eta^2}\right) =: b > 0\,.
\]
Since the upper bound on $\widehat Q_{h,N}^\mathrm{MC}$ and 
the lower bound on $\widehat Z_{h,N}^\mathrm{MC}$ are independent of the random samples $\{\xi_{\underline J}^{(i)}\}_{i=1}^N$, the claim of the lemma follows for Monte Carlo estimators. The proof for quasi-Monte Carlo estimators is identical.

For multilevel Monte Carlo estimators, an upper bound on $\widehat Q_{h,\{N_\ell\}}^\mathrm{ML}$ follows as before. On the other hand, to bound $\widehat Z_{h,\{N_{\ell}\}}^\mathrm{ML}$ we can use Theorem \ref{thm:fe_post} which implies that
\[
\widehat Z_{h,\{N_{\ell}\}}^\mathrm{ML} \geq b - \sum_{\ell=1}^L C h_{\ell}^{2s}, 
\]
for a constant $C$ independent of $\{ h_\ell \}$. If we choose $h_0$ sufficiently small such that $\sum_{\ell=1}^\infty h_{\ell}^{2s} < b/C$, this lower bound on $\widehat Z_{h,\{N_{\ell}\}}^\mathrm{ML}$ is positive and independent of $\{h_\ell\}$, $\{N_\ell\}$ and $L$. The claim of the Lemma then follows also for MLMC estimators.
\end{proof}

Using Lemma \ref{lem:rat_linfty} and H\"older's inequality, it then follows from \eqref{eq:rat_mse2} that
\[
e\left(\widehat Q_{h} \Big/ \widehat Z_{h}\right)^2 
\leq 2/Z^2 \, \max\{1,\| \widehat Q_{h} / \widehat Z_{h}\|^{2}_{L^\infty}\} \, \left( \EE\left[ (Q - \widehat Q_{h})^2 \right] + \EE\left[ (Z - \widehat Z_{h})^2 \right] \right)\,.
\]
Thus, the MSE of the ratio $\widehat Q_{h} \big/\widehat Z_{h}$ can be bounded by the sum of the MSEs of $\widehat Q_{h}$ and $\widehat Z_{h}$. 
Using the fact that, {for $Q_h$ the mean of the estimator $\widehat Q_{h}$},
\begin{equation}
\EE\left[ (Q - \widehat Q_{h})^2 \right]  = \left(\EE[ Q - Q_{h}] \right)^2 +  \VV\left[\widehat Q_{h}\right]
\end{equation}
and the results from Sections \ref{sec:fe} and \ref{sec:samp}, this gives the following bounds on the MSEs.

\begin{theorem}\label{thm:mse_unif} Suppose the relevant assumptions of 
{Proposition} \ref{thm:spat_reg}, Lemma \ref{lem:qmc_var_linear} and Lemma \ref{lem:rat_linfty} hold in each case. Then\vspace{-1ex}
\begin{align*}
e\left(\widehat Q_{h,N}^\mathrm{MC}\Big/\widehat Z_{h,N}^\mathrm{MC}\right)^2 &\leq C_\mathrm{MC} \left( N^{-1} + h^{4s}\right), \\
e\left(\widehat Q_{h,N}^\mathrm{QMC} \Big/ \widehat Z_{h,N}^\mathrm{QMC}\right)^2 &\leq C_\mathrm{QMC} \left( N^{-1/\delta} + h^{4s}\right), \\
e\left(\widehat Q_{h,\{N_\ell\}}^\mathrm{ML} \Big/ \widehat Z_{h,\{N_\ell\}}^\mathrm{ML}\right)^2 &\leq C_\mathrm{ML} \bigg( \sum_{\ell=0}^L \frac{h_\ell^{4s}}{N_\ell} + h^{4s}\bigg),
\end{align*}
for some $1/2 < \delta \leq 1$ and for some $0 < s \le 1$, related to the spatial regularity of the data (cf. {Proposition} \ref{thm:spat_reg}), and for constants $C_\mathrm{MC}$,  $C_\mathrm{QMC}$ and $C_\mathrm{ML}$ independent of {$h, N, \{h_\ell\}, \{N_\ell\}$ and $L$}.
\end{theorem}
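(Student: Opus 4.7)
The plan is to start from the bound already established immediately before the theorem, namely
\[
e\bigl(\widehat Q_h/\widehat Z_h\bigr)^2 \;\leq\; \frac{2}{Z^2}\,\max\bigl\{1,\|\widehat Q_h/\widehat Z_h\|_{L^\infty}^{2}\bigr\}\,\Bigl(\EE\bigl[(Q-\widehat Q_h)^2\bigr] + \EE\bigl[(Z-\widehat Z_h)^2\bigr]\Bigr),
\]
where the $L^\infty$ prefactor is finite and independent of the discretisation and sampling parameters by Lemma \ref{lem:rat_linfty} (and the uniform ellipticity hypothesis). Thus the theorem reduces to separately bounding the two MSEs $\EE[(Q-\widehat Q_h)^2]$ and $\EE[(Z-\widehat Z_h)^2]$ in each of the three sampling regimes; the bounds are completely symmetric in $Q$ and $Z$, so it suffices to do the argument for $Q$.

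Next I would perform a standard bias--variance decomposition. For the MC and MLMC estimators, both $\widehat Q_{h,N}^{\mathrm{MC}}$ and $\widehat Q_{h,\{N_\ell\}}^{\mathrm{ML}}$ are unbiased for $Q_h=\EE_{\mu_0}[\psi(p_h)]$, so
\[
\EE\bigl[(Q-\widehat Q_h)^2\bigr] \;=\; \bigl(Q-Q_h\bigr)^2 + \VV\bigl[\widehat Q_h\bigr].
\]
For QMC the analogous identity holds with expectation and variance taken over the random shift $\Delta$, with the deterministic mean being $\EE_{\mu_0}[\psi(p_h^J)]$; the additional truncation error $|Q_h-\EE_{\mu_0}[\psi(p_h^J)]|$ is negligible under the assumption stated at the start of Section \ref{ssec:samp_qmc}, so it can be absorbed. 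In every case the bias term is bounded using Theorem \ref{thm:fe_post}, since
\[
|Q - Q_h| \;=\; \bigl|\EE_{\mu_0}[\psi(p)-\psi(p_h)]\bigr| \;\leq\; \|\psi(p)-\psi(p_h)\|_{L^1_{\mathbb P}} \;\leq\; C\,h^{2s},
\]
and squaring yields the desired $h^{4s}$ contribution.

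For the variance term I would simply quote the sampling estimates of Section \ref{sec:samp}: the MC variance formula \eqref{eq:mc_var} gives $\VV[\widehat Q_{h,N}^{\mathrm{MC}}]\lesssim N^{-1}$ using that $\VV[\psi(p_h)]$ is bounded uniformly in $h$ (which follows from the $L^\infty$ bound on $\psi(p_h)$ in the uniformly elliptic setting of Lemma \ref{lem:rat_linfty}); Lemma \ref{lem:qmc_var_linear} gives the QMC rate $N^{-1/\delta}$; and the MLMC formula \eqref{eq:mlmc_var} together with Theorem \ref{thm:fe_post} produces $\sum_{\ell=0}^{L} h_\ell^{4s}/N_\ell$. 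Assembling the bias and variance contributions for $\widehat Q_h$ and $\widehat Z_h$ and plugging back into the displayed inequality completes all three claims.

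The only mildly delicate step is checking uniformity of the constants: that the variance of $\psi(p_h)$ and $\theta(p_h)$ is bounded independently of $h$ (which relies on the $L^\infty$ bounds on $\psi, \theta$ in Lemma \ref{lem:rat_linfty} and Theorem \ref{thm:fe_post}), and that the Assumption A3 constants entering Lemma \ref{lem:qmc_var_linear} can be taken independent of $h$. These are essentially bookkeeping, but are the main places where uniform ellipticity is used in an essential way.
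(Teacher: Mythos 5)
Your proposal is correct and follows essentially the same route as the paper: the reduction via the $L^\infty$ bound of Lemma \ref{lem:rat_linfty} and H\"older's inequality to the separate MSEs of $\widehat Q_h$ and $\widehat Z_h$, the bias--variance decomposition, the $h^{4s}$ bias contribution from Theorem \ref{thm:fe_post}, and the variance bounds from \eqref{eq:mc_var}, Lemma \ref{lem:qmc_var_linear} and \eqref{eq:mlmc_var}. The paper presents exactly this argument (in compressed form) in the text preceding the theorem statement, so no further comparison is needed.
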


We note that the convergence rates of the mean square errors in Theorem \ref{thm:mse_unif} are identical to the convergence rates obtained for the individual prior estimators $\widehat Q_h$ and $\widehat Z_h$. 

\subsection{Non-uniformly elliptic case}\label{ssec:mse_gauss}

If the functionals $\phi$ and $\mathcal H$ are uniformly bounded, in the sense that 
\begin{equation}
\label{def:uniformbound}
|\phi(v)| \leq M_1 \ \ \text{and} \ \ |\mathcal H(v)| \leq M_2, \ \ \text{for all} \ \ v \in H^1_0(D), \ \ \text{and} \ \ |\theta(p) - \theta(p_h)| \leq M_3 h,
\end{equation}  
for some constants $M_1, M_2, M_3 > 0$, then the analysis in Lemma \ref{lem:rat_linfty} carries over to the non-uniformly elliptic case, with only minor modifications in the proof.

For more general functionals $\phi$ and $\mathcal H$, the analysis is significantly more difficult and we are only able to analyse Monte Carlo estimators. We restrict to functionals $\phi$ and $\mathcal H$ that satisfy the {polynomial growth conditions} in Assumption A4. Then we can follow an approach similar to \cite{dl09} to obtain the following integrability result on $\widehat Q_{h,N}^\mathrm{MC}/\widehat Z_{h,N}^\mathrm{MC}$.

\begin{lemma}\label{lem:rat_mc_lr} Suppose that $\phi$ and $\mathcal H$ satisfy Assumption A4 and that the same $N$ i.i.d. samples $\{\xi_{\underline J}^{(i)}\}_{i=1}^N$ are used in the estimators $\widehat Q_{h,N}^\mathrm{MC}$ and $\widehat Z_{h,N}^\mathrm{MC}$. Then $\widehat Q_{h,N}^\mathrm{MC} \big/ \widehat Z_{h,N}^\mathrm{MC} \in L^r_\mathbb{P}(\R^J)$ and $\|\widehat Q_{h,N}^\mathrm{MC} \big/ \widehat Z_{h,N}^\mathrm{MC}\|_{L^r_\mathbb{P}(\R^J)}$ can be bounded independent of $h$ and $N$, for all $1 \leq r < \infty$. 
\end{lemma}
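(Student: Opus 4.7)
The key structural observation is that, because the same $N$ samples enter numerator and denominator and $\theta$ is strictly positive, one can rewrite
\begin{equation*}
\frac{\widehat Q_{h,N}^{\mathrm{MC}}}{\widehat Z_{h,N}^{\mathrm{MC}}} \;=\; \sum_{i=1}^N W_i \, \phi\bigl(p_h(\cdot;\xi^{(i)})\bigr), \qquad W_i \;:=\; \frac{\theta(p_h(\cdot;\xi^{(i)}))}{\sum_{j=1}^N \theta(p_h(\cdot;\xi^{(j)}))},
\end{equation*}
as a convex combination, with $W_i \ge 0$ and $\sum_i W_i = 1$. This immediately gives the pointwise bound $|\widehat Q_{h,N}^{\mathrm{MC}}/\widehat Z_{h,N}^{\mathrm{MC}}| \le \max_{1\le i\le N} |\phi(p_h(\cdot;\xi^{(i)}))|$. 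The polynomial growth hypothesis on $\phi$ and $\mathcal H$, combined with the Lax--Milgram bound $|p_h|_{H^1(D)} \le \|f\|_{H^{-1}(D)}/k_\mathrm{min}(\xi)$ (Lemma \ref{lem:laxmil}) and the fact that $k_\mathrm{min}^{-1} \in L^q_\mathbb{P}(\R^\infty)$ for every $q<\infty$ (Lemma \ref{lem:gauss_exp}), then gives a uniform-in-$h$ bound on $\|\phi(p_h)\|_{L^q_\mathbb{P}}$ for every $q < \infty$, and likewise for $\mathcal H(p_h)$ and hence for $\theta(p_h)$.

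The plan is now to split $\EE[|\widehat Q_{h,N}^{\mathrm{MC}}/\widehat Z_{h,N}^{\mathrm{MC}}|^r]$ according to the event $A := \{\widehat Z_{h,N}^{\mathrm{MC}} \ge Z_h/2\}$. Since Theorem \ref{thm:fe_post} implies $Z_h \to Z > 0$, the value $Z_h$ is uniformly bounded away from zero for all sufficiently small $h$, so on $A$ the denominator $\widehat Z$ is uniformly controlled. Using in addition that $\theta \le 1$, one obtains $|\widehat Q/\widehat Z|^r \le (2/Z_h)^r \bigl((1/N) \sum_i |\phi(p_h(\cdot;\xi^{(i)}))|\bigr)^r$, and a second application of Jensen's inequality to the sample average yields
\begin{equation*}
\EE\bigl[|\widehat Q_{h,N}^{\mathrm{MC}}/\widehat Z_{h,N}^{\mathrm{MC}}|^r \, \mathbf{1}_A\bigr] \;\le\; (2/Z_h)^r \, \EE\bigl[|\phi(p_h)|^r\bigr],
\end{equation*}
which is bounded uniformly in $h$ and $N$ by the remarks above.

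On the complement $A^c$, I would use the crude bound $|\widehat Q/\widehat Z|^r \le \max_i |\phi(p_h(\cdot;\xi^{(i)}))|^r$ from the first paragraph together with Cauchy--Schwarz, then bound the maximum by the sum to get $\EE[\max_i |\phi(p_h(\cdot;\xi^{(i)}))|^{2r}] \le N \, \EE[|\phi(p_h)|^{2r}]$, and apply Chebyshev's inequality to the i.i.d.\ average of the bounded variables $\theta(p_h(\cdot;\xi^{(i)})) \in (0,1]$ to obtain $\mathbb{P}(A^c) \le 4\,\mathrm{Var}[\theta(p_h)]/(Z_h^2 N) \le 4/(Z_h^2 N)$. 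The two factors of $\sqrt N$ then cancel exactly, yielding an $N$-independent bound. The main obstacle is that this cancellation is exactly on the margin: the combination of the deterministic bound $\max \le \sum$ with the variance-based concentration rate $\mathbb{P}(A^c) = O(1/N)$, via Cauchy--Schwarz, is what forces the requirement $\phi(p_h), \mathcal H(p_h) \in L^{2r}_\mathbb{P}$ uniformly in $h$; it is precisely this that is delivered by the polynomial-growth hypothesis together with the $L^q$-integrability of $k_\mathrm{min}^{-1}$ for all finite $q$. Observe that the argument uses exchangeability of the samples in an essential way through the convex-combination representation, which is why the method does not obviously extend to QMC or MLMC estimators.
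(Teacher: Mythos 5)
Your proof is correct, and it starts from the same key observation as the paper: since the same samples enter numerator and denominator and $\theta>0$ pointwise, the ratio is a convex combination of the $\phi(p_h(\cdot;\xi^{(i)}))$ and hence bounded by $\max_{1\le i\le N}|\phi(p_h(\cdot;\xi^{(i)}))|$. From there the two arguments genuinely diverge. The paper controls $\EE[\max_i|\phi(p_h(\cdot;\xi^{(i)}))|^{r}]$ directly via the Jensen-type inequality of \cite{dl09}, obtaining $\|\max_i|\phi|\|_{L^r}\le N^{1/\tilde r}\|\phi(p_h)\|_{L^{\tilde r}}$ and then choosing $\tilde r\ge\ln N$ so that $N^{1/\tilde r}\le e$. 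You instead split on the event $A=\{\widehat Z_{h,N}^{\mathrm{MC}}\ge Z_h/2\}$: on $A$ the denominator is deterministically controlled and Jensen applied to the sample average suffices, while on $A^c$ Cauchy--Schwarz combines the crude bound $\EE[\max_i|\phi|^{2r}]\le N\,\EE[|\phi(p_h)|^{2r}]$ with the Chebyshev estimate $\mathbb P(A^c)\le 4\VV[\theta(p_h)]/(Z_h^2N)$, the two factors of $\sqrt N$ cancelling. Your route costs you a lower bound on $Z_h$ that is uniform in $h$; note that deriving it from Theorem \ref{thm:fe_post} imports hypotheses beyond those of the lemma and only covers small $h$ --- it is cleaner to observe from Assumption A2 and Lemma \ref{lem:gauss_exp} that $\theta(p_h)\ge\exp(-g(\xi))$ for an $h$-independent, almost surely finite $g$, whence $Z_h\ge\EE[e^{-g}]>0$ for every $h$. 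What the route buys is a bound involving only the fixed moments $\|\phi(p_h)\|_{L^r}$ and $\|\phi(p_h)\|_{L^{2r}}$, so the constant is manifestly independent of $N$; by contrast, the paper's choice $\tilde r=\ln N$ leaves a factor $\|\phi(p_h)\|_{L^{\tilde r}}$, controlled through $\|k_{\min}^{-1}\|_{L^{n_1\tilde r}}$, which in the log-normal case grows with $\tilde r$ and hence with $N$ --- so your argument is arguably the more robust of the two. One small correction to your closing remark: the obstruction to extending the argument to MLMC is not exchangeability but the possible negativity of $\widehat Z_{h,\{N_\ell\}}^{\mathrm{ML}}$, which already destroys the convex-combination representation in the first step; for QMC the issue is rather that the points are not i.i.d., so both the moment bound for the maximum and the Chebyshev step would need replacing.
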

\begin{proof}
To simplify the presentation, let us denote $p_h^{i} = p_h(\cdot ; \xi_{\underline J}^{(i)})$.
Then, due to \eqref{eq:fe_h1} and \eqref{polygrowth} we can bound $\theta(p_h^{(i)}) > 0$, $\mathbb P$-almost surely. Hence, it follows from \eqref{eq:mc_est} that
\begin{equation}\label{eq:ratmc}
\left| \frac{\widehat Q_{h,N}^\mathrm{MC}}{\widehat Z_{h,N}^\mathrm{MC}}  \right|= \left|\frac{\frac{1}{N} \sum_{i=1}^N \phi(p_h^{i}) \theta(p_h^{i})}{\frac{1}{N} \sum_{i=1}^N \theta(p_h^{i})}\right| \leq \max_{1 \leq i \leq N} |\phi(p_h^{i})|.
\end{equation}
Using the same argument as in the proof of \cite[Lemma 1]{dl09}, for any convex and non-decreasing function $\rho: \mathbb R_+ \rightarrow \mathbb R_+$, we have
\[
\rho\left(\EE\Big[ \max_{1 \leq i \leq N} \big|\phi(p_h^{i})\big|^{r} \Big] \right) \leq \EE \left[ \rho\Big(\max_{1 \leq i \leq N} \big|\phi(p_h^{i})\big|^{r}\Big)\right] 
\leq \sum_{i=1}^{N} \EE\left[\rho\Big(\big|\phi(p_h^{i})\big|^{r}\Big)\right]. 
\]
Choosing $\rho(x) = |x|^{\tilde r/r}$, for some $r \le \tilde r < \infty$, and using \eqref{polygrowth} we have 
\[
\Big \| \max_{1 \leq i \leq N} |\phi(p_h^{i})| \Big\|_{L^{r}_\mathbb{P}(\R^J)} \leq \|\phi(p_h)\|_{L^{\tilde r}_\mathbb{P}(\R^J)} N^{1/\tilde r} \leq M_1 \left( 1 + \|f\|^{n_2}_{H^{-1}(D)} \|k_\mathrm{min}^{-1}\|^{n_2}_{L^{n_2 \tilde r}_\mathbb{P}(\R^J)}\right) N^{1/\tilde r}\,,
\]
The term in the bracket is finite due to Lemma \ref{lem:gauss_exp}  and the claim of the Lemma now follows if we choose $\tilde r \ge \text{ln} \, N$.
\end{proof}

\begin{theorem}\label{thm:mse_nonunif}
Suppose the assumptions of {Proposition} \ref{thm:spat_reg} and Lemma \ref{lem:rat_mc_lr} hold. Then
\begin{equation*}
e\left(\widehat Q_{h,N}^\mathrm{MC}\Big/\widehat Z_{h,N}^\mathrm{MC}\right)^2
\leq C_\mathrm{MC} \left( N^{-1} + h^{4s}\right)\,,
\end{equation*}
for some $0 < s \le 1$ related to the spatial regularity of the data 
(cf.~{Proposition} \ref{thm:spat_reg}) and for a constant $C_\mathrm{MC}>0$ independent of $h$ and $N$.
\end{theorem}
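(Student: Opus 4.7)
\textbf{Proof proposal for Theorem \ref{thm:mse_nonunif}.}
The starting point is the decomposition \eqref{eq:rat_mse2}, which bounds the MSE of the ratio by the sum of $\EE[(\widehat Q_{h,N}^{\mathrm{MC}} - Q)^2]$ and $\EE[(\widehat Q_{h,N}^{\mathrm{MC}}/\widehat Z_{h,N}^{\mathrm{MC}})^{2}\,(\widehat Z_{h,N}^{\mathrm{MC}} - Z)^2]$. In the uniformly elliptic case (Section \ref{ssec:mse_unif}), the second term was handled by pulling the ratio out in $L^\infty$. Here we no longer have an $L^\infty$ bound, but Lemma \ref{lem:rat_mc_lr} provides uniform $L^r_{\mathbb P}$ bounds on the ratio for every finite $r$, so the plan is to apply H\"older's inequality with a fixed finite conjugate pair $(r,r')$ in place of $(1,\infty)$.

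For the first term I would use the standard bias--variance split $\EE[(\widehat Q_{h,N}^{\mathrm{MC}} - Q)^2] = (Q - Q_h)^2 + \VV[\widehat Q_{h,N}^{\mathrm{MC}}]$. The bias is bounded by $C\,h^{4s}$ directly by Theorem \ref{thm:fe_post}, and the variance equals $\VV[\psi(p_h)]/N$ by \eqref{eq:mc_var}. To show $\VV[\psi(p_h)]$ is bounded uniformly in $h$, use $|\psi(p_h)| = |\phi(p_h)|\,\theta(p_h) \le M_1(1+|p_h|_{H^1(D)}^{n_1})$ from \eqref{polygrowth} together with $\theta \le 1$, and then control $\EE[|p_h|_{H^1(D)}^{2n_1}]$ using the a~priori bound \eqref{eq:fe_h1} combined with Lemma \ref{lem:gauss_exp} (or Lemma \ref{lem:unif_bound} in the uniform case). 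This gives a uniform constant and hence a contribution of order $N^{-1} + h^{4s}$.

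For the second term, H\"older's inequality with conjugate exponents $r,r' \in (1,\infty)$ gives
\[
\EE\!\left[\bigl(\widehat Q_{h,N}^{\mathrm{MC}}/\widehat Z_{h,N}^{\mathrm{MC}}\bigr)^{2} (\widehat Z_{h,N}^{\mathrm{MC}} - Z)^2\right] \;\le\; \bigl\|\widehat Q_{h,N}^{\mathrm{MC}}/\widehat Z_{h,N}^{\mathrm{MC}}\bigr\|_{L^{2r'}_{\mathbb P}}^{2}\;\bigl\|\widehat Z_{h,N}^{\mathrm{MC}} - Z\bigr\|_{L^{2r}_{\mathbb P}}^{2}.
\]
The first factor is bounded independently of $h$ and $N$ by Lemma \ref{lem:rat_mc_lr}. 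For the second factor, split via the triangle inequality, $\|\widehat Z_{h,N}^{\mathrm{MC}} - Z\|_{L^{2r}_{\mathbb P}} \le \|\widehat Z_{h,N}^{\mathrm{MC}} - Z_h\|_{L^{2r}_{\mathbb P}} + |Z_h - Z|$. The deterministic bias is controlled by Theorem \ref{thm:fe_post} and yields $C h^{2s}$. For the stochastic part, note that $\widehat Z_{h,N}^{\mathrm{MC}} - Z_h$ is the empirical mean of $N$ i.i.d.\ centred random variables $\theta(p_h(\cdot;\xi^{(i)})) - Z_h$, each bounded in $[-1,1]$ since $0 \le \theta \le 1$. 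The Marcinkiewicz--Zygmund inequality (which for bounded variates and fixed finite $r$ is elementary to derive by expanding the $2r$-th power and observing that only terms with each index appearing at least twice survive) then yields $\|\widehat Z_{h,N}^{\mathrm{MC}} - Z_h\|_{L^{2r}_{\mathbb P}} \le C_r N^{-1/2}$. Squaring and combining gives a contribution of order $N^{-1} + h^{4s}$.

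The main obstacle is conceptual rather than computational: without the $L^\infty$ control that drove the uniformly elliptic proof, one must upgrade the variance bound on $\widehat Z_{h,N}^{\mathrm{MC}} - Z_h$ to a higher-moment bound, and simultaneously ensure that the ratio $\widehat Q_{h,N}^{\mathrm{MC}}/\widehat Z_{h,N}^{\mathrm{MC}}$ is integrable in the matching conjugate exponent. The first is provided by Marcinkiewicz--Zygmund (using the uniform boundedness of $\theta$), the second by Lemma \ref{lem:rat_mc_lr}. Fixing, for instance, $r = r' = 2$ suffices, and all constants in the final bound are independent of $h$ and $N$, yielding the claimed rate.
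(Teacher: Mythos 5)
Your proposal is correct and follows essentially the same route as the paper: the same decomposition \eqref{eq:rat_mse2}, Cauchy--Schwarz (your H\"older with $r=r'=2$) pairing the $L^4$ bound on the ratio from Lemma \ref{lem:rat_mc_lr} with an $L^4$ bound on $\widehat Z_{h,N}^{\mathrm{MC}}-Z$, the triangle-inequality split into bias (Theorem \ref{thm:fe_post}) and sampling error, and an $O(N^{-1/2})$ fourth-moment bound on the empirical mean of the bounded variates $\theta(p_h(\cdot;\xi^{(i)}))$ --- the paper obtains the latter by the explicit fourth central moment formula, which is exactly the elementary computation underlying your Marcinkiewicz--Zygmund step.
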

\begin{proof}
Since $\widehat Q_{h,N}^\mathrm{MC} \big/ \widehat Z_{h,N}^\mathrm{MC}$ is not in $L^\infty_\mathbb{P}(\R^J)$ in this case, we apply the Cauchy-Schwarz inequality to the second term on the right hand side of \eqref{eq:rat_mse2} to obtain
\begin{equation}
\label{ineq:thm_nonunif}
e\left(\widehat Q_{h,N}^\mathrm{MC} \Big/ \widehat Z_{h,N}^\mathrm{MC} \right)^2  
\le \frac{2}{Z^2} \bigg(\EE\left[ (\widehat Q_{h,N}^\mathrm{MC} - Q )^2\right]  + \|\widehat Q_{h,N}^\mathrm{MC} \big/ \widehat Z_{h,N}^\mathrm{MC}\|^2_{L^4_\mathbb{P}(\R^J)} \|\widehat Z_{h,N}^\mathrm{MC} - Z \|^2_{L^4_\mathbb{P}(\R^J)}\bigg). 
\end{equation}
To bound $\| \widehat Z_{h,N}^\mathrm{MC} - Z \|_{L^4_\mathbb{P}(\R^J)}$ we apply the triangle inequality and consider separately $\| Z_h - Z \|_{L^4_\mathbb{P}(\R^J)}$ and $\| \widehat Z_{h,N}^\mathrm{MC} - Z_h \|_{L^4_\mathbb{P}(\R^J)}$. The former is bounded by $Ch^{2s}$ due to Theorem \ref{thm:fe_post}. 

To bound the latter, let $X_i := \theta(p_h(\cdot;\xi_{\underline J}^{(i)})$, $i=1,\ldots,N$. Since the range of $\theta$ is $[0,1]$, this is a sequence of i.i.d. random variables with finite mean $m := Z_h$, finite variance $\sigma_X^2 := \EE[(\theta(p_h)-Z_h)^2]$ and finite central fourth moment $\tau_X^4 := \EE[(\theta(p_h)-Z_h)^4]$. A direct calculation gives
\[
\| \widehat Z_{h,N}^\mathrm{MC} - Z_h \|_{L^4_\mathbb{P}(\R^J)}^4 = \EE\left[ \left( \widehat X_N - m \right)^4 \right] \, = \, \frac{3 (N-1)\sigma_X^4/2 + \tau_X^4}{N^3} \, \leq \, \frac{3 \sigma_X^4/2 + \tau_X^4/N}{N^2}.
\]
Thus, the result follows from \eqref{ineq:thm_nonunif} together with Theorem \ref{thm:fe_post} and Lemma \ref{lem:rat_mc_lr}.
\end{proof}

The proofs of Lemma \ref{lem:rat_mc_lr} and Theorem \ref{thm:mse_nonunif} can potentially also be extended to the case of Quasi-Monte Carlo estimators. However, it seems impossible to satisfy Assumption A4 in the case of non-uniformly elliptic coefficients $k$ and so we did not pursue this any further.

The analysis in the case of multilevel Monte Carlo estimators is complicated by the fact that the multilevel estimator $\widehat Z_{h,\{N_\ell\}}^\mathrm{ML}$ can take on negative values. This precludes the approach in the proof of Lemma \ref{lem:rat_mc_lr}. The existence of moments of a ratio of random variables where the denominator is not strictly positive has been the subject of research since the 1930s \cite{ceary30,fieller32,hinkley69,oa94,kl01,gp12}, and is a problem not yet fully solved. A possible approach to show existence of moments of the ratio $\widehat Q_{h,\{N_\ell\}}^\mathrm{ML}\big/\widehat Z_{h,\{N_\ell\}}^\mathrm{ML}$ could be to use the Central Limit Theorem in \cite{chnst14}, which shows that the individual MLMC estimators are asymptotically normally distributed as the number of levels and the number of samples per level tend to infinity. Hinkley \cite{hinkley69} then gives an explicit expression of the cumulative distribution function of the ratio of two correlated normal random variables, together with its limiting  normal distribution, as the denominator tends to a normal random variable with non-zero mean and zero variance.

\subsection{Computational $\varepsilon$-cost}
Based on the bounds on the mean square errors given in Theorems \ref{thm:mse_unif} and \ref{thm:mse_nonunif}, we now analyse the computational complexity of the various estimators of our quantity of interest $Q/Z$. We are interested in bounding the $\varepsilon$-cost, i.e., the cost required to achieve a MSE of order $\varepsilon^2$. Since the convergence rates of the mean square error are the same as for the individual estimators $\widehat Q_h$ and $\widehat Z_h$, bounds on the computational $\varepsilon$-cost can be proved as in \cite{tsgu13, gknsss15}. 

We denote by $C_\ell$ the cost of obtaining one sample of $\theta(p_{h_\ell})$ and/or $\psi(p_{h_\ell})$. { This cost will typically also depend on the truncation parameter $J$, but we will not make this dependence explicit here. }
We furthermore denote by $\mathcal{C}_\mathrm{MC}$, $\mathcal{C}_\mathrm{ML}$ and $\mathcal{C}_\mathrm{QMC}$ the computational cost of the ratio estimator $\widehat Q_h \big/ \widehat Z_h$ based on MC, MLMC and QMC estimators, respectively.

\begin{theorem}\label{thm:comp} 
Let the conclusions of Theorem \ref{thm:mse_unif} or Theorem \ref{thm:mse_nonunif} hold and suppose 
\[
\mathcal{C}_\ell \leq C_\gamma h_\ell^{-\gamma}, \quad \text{for some} \ \  \gamma > 0.
\]
Then for any $\varepsilon < e^{-1}$, there exist a constant $C^{\mathrm{ML}}>0$, a value $L \in \mathbb{N}$ and a sequence $\{N_\ell\}_{\ell=0}^{L}$, such that\vspace{-2ex}
\begin{equation*} 
\qquad e\left(\widehat Q_{h,\{N_\ell\}}^\mathrm{ML}/\widehat Z_{h,\{N_\ell\}}^\mathrm{ML}\right)^2 \leq \varepsilon^2\quad \text{and} \quad \mathcal{C}_\mathrm{ML} \leq \left\{ 
\begin{array}{ll}
         C^{\mathrm{ML}}\varepsilon^{-2}, & \mbox{if \ $s < \gamma/4$},\\
		C^{\mathrm{ML}}\varepsilon^{-2} (\log \varepsilon)^2, & \mbox{if \ $s = \gamma/4$},\\
        C^{\mathrm{ML}}\varepsilon^{-\gamma/2s}, & \mbox{if \ $s > \gamma/4$},\end{array}
\right.
\end{equation*}
where $0 < s \le 1$ is related to the spatial regularity of the data (cf. {Proposition} \ref{thm:spat_reg}). Furthermore, there exist positive constants $C^{\mathrm{MC}}$ and $C^{\mathrm{QMC}}$ and values of $h$ and $N$, such that 
\begin{align*}
e\left(\widehat Q_{h,N}^\mathrm{MC}/\widehat Z_{h,N}^\mathrm{MC}\right)^2 \leq \varepsilon^2 \quad &\text{and} \quad \mathcal{C}_\mathrm{MC} \leq C^{\mathrm{MC}} \; \varepsilon^{-2-\gamma/2s}, \\
\qquad e\left(\widehat Q_{h,N}^\mathrm{QMC}/\widehat Z_{h,N}^\mathrm{QMC}\right)^2 \leq \varepsilon^2 \quad &\text{and} \quad
 \mathcal{C}_\mathrm{QMC} \leq C^{\mathrm{QMC}} \; \varepsilon^{-2\delta-\gamma/2s}, \quad \text{for some} \ \ 1/2 < \delta \le 1.
 \end{align*}
\end{theorem}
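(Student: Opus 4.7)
The plan is to choose the free discretisation and sampling parameters so that both contributions to the MSE bounds in Theorems \ref{thm:mse_unif} and \ref{thm:mse_nonunif} are at most $\varepsilon^2/2$, and then to read off the total cost from the assumption $\mathcal{C}_\ell \leq C_\gamma h_\ell^{-\gamma}$. For the MC estimator, $C_{\mathrm{MC}}(N^{-1}+h^{4s}) \leq \varepsilon^2$ is achieved by $h \sim \varepsilon^{1/(2s)}$ and $N \sim \varepsilon^{-2}$, giving $\mathcal{C}_{\mathrm{MC}} \sim N\, h^{-\gamma} \sim \varepsilon^{-2-\gamma/(2s)}$; the QMC argument is identical except that the variance bound $N^{-1/\delta}$ from Lemma \ref{lem:qmc_var_linear} forces $N \sim \varepsilon^{-2\delta}$, giving $\mathcal{C}_{\mathrm{QMC}} \sim \varepsilon^{-2\delta-\gamma/(2s)}$.

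For the MLMC estimator I would follow the standard Giles-type complexity proof. First choose $L$ as the smallest integer with $h_L^{4s} \leq \varepsilon^2/(2 C_{\mathrm{ML}})$, so that $h_L \sim \varepsilon^{1/(2s)}$ and $L \sim |\log \varepsilon|$. Then minimise $\sum_{\ell=0}^L N_\ell\, \mathcal{C}_\ell$ subject to $\sum_{\ell=0}^L h_\ell^{4s}/N_\ell \leq \varepsilon^2/(2 C_{\mathrm{ML}})$ via Lagrange multipliers, giving the classical allocation $N_\ell \propto \sqrt{V_\ell/\mathcal{C}_\ell} \sim h_\ell^{(4s+\gamma)/2}$. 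Substituting this back yields
\[
\mathcal{C}_{\mathrm{ML}} \;\lesssim\; \varepsilon^{-2} \bigg(\sum_{\ell=0}^L h_\ell^{(4s-\gamma)/2}\bigg)^{2},
\]
and the three stated regimes follow from standard geometric-series estimates according to whether $4s-\gamma$ is positive, zero, or negative, after substituting $h_L \sim \varepsilon^{1/(2s)}$ and $L \sim |\log\varepsilon|$.

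The main technical point, rather than a deep obstruction, is to handle the rounding of $N_\ell$ to positive integers: on any fine level where the Lagrange rule returns $N_\ell < 1$ one must set $N_\ell = 1$, contributing an extra $\sum_\ell \mathcal{C}_\ell \lesssim h_L^{-\gamma} \sim \varepsilon^{-\gamma/(2s)}$ to the total cost, which one verifies is absorbed into the stated bound in each of the three regimes. The hypothesis $\varepsilon < e^{-1}$ is used only to keep $|\log\varepsilon|$ bounded away from zero so that the constants $C^{\mathrm{MC}}$, $C^{\mathrm{QMC}}$, $C^{\mathrm{ML}}$ can be taken independently of $\varepsilon$. Since the relevant bias and variance rates are already packaged in Theorems \ref{thm:mse_unif} and \ref{thm:mse_nonunif}, no new PDE- or functional-analytic ingredient is required beyond this bookkeeping.
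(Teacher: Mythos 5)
Your proposal is correct and follows essentially the same route as the paper, which gives no explicit proof but defers to the standard MLMC complexity theorem of \cite{tsgu13} and the QMC analysis of \cite{gknsss15}: split each MSE bound from Theorems \ref{thm:mse_unif} and \ref{thm:mse_nonunif} into bias and sampling contributions, equilibrate them at $\varepsilon^2/2$, use the Lagrange-multiplier allocation $N_\ell \propto \sqrt{V_\ell/\mathcal{C}_\ell}$ for MLMC, and handle the integer rounding of $N_\ell$ exactly as you describe. One caveat: carrying your geometric-series step to its conclusion gives cost $\varepsilon^{-2}$ when $4s-\gamma>0$ (i.e. $s>\gamma/4$, the sum dominated by the coarsest level) and $\varepsilon^{-\gamma/2s}$ when $s<\gamma/4$ (dominated by the finest level), which is the standard result but with the two non-logarithmic cases labelled oppositely to the theorem as printed — the statement's case labels appear to be transposed (note that $\varepsilon^{-\gamma/2s}$ with $s>\gamma/4$ would be cheaper than $\varepsilon^{-2}$, which is impossible since the coarsest level alone already costs $N_0\,\mathcal{C}_0 \gtrsim \varepsilon^{-2}$), so you should not simply assert that ``the three stated regimes follow'' without flagging this.
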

{Theorem \ref{thm:comp} shows that MLMC and QMC can outperform standard MC in terms of the growth rate of the $\varepsilon$-cost. However, both QMC and MLMC also require stronger assumptions than MC (cf section \ref{sec:samp}).}

\section{Numerical examples}\label{sec:num}
We now study the performance of the ratio estimators on a typical model problem. As the forward model, we take the elliptic equation
\begin{equation}\label{eq:mod_num}
{-}\nabla \cdot (k(x;\xi_{\underline J})) \nabla p(x;\xi_{\underline J})) = 0, \qquad \text{in } D = (0,1)^2,
\end{equation}
subject to the deterministic, mixed boundary conditions $p|_{x_1=0}=1, p|_{x_1=1}=0$ and zero Neumann conditions on the remainder of the boundary. The prior distribution on the coefficients $\xi_{\underline J}$ is Gaussian, as in Section \ref{ssec:invprob_para_gauss}, and $k$ is a (truncated) log-normal random field. We choose the exponential covariance function \eqref{eq:cov_exp} with $r=1$, correlation length $\lambda=0.3$ and variance $\sigma^2=1$. The mean $m_0$ is chosen to be 0. In this case, the assumptions of {Proposition} \ref{thm:spat_reg} hold for any $t < 1/2$.

For the spatial discretisation, we use standard, continuous, piecewise linear finite elements on a uniform triangular mesh. The stiffness matrix is assembled using the trapezoidal rule for quadrature. The mesh hierarchy for the MLMC estimator is generated by uniform refinement of a uniform grid with coarsest mesh width $h_0=1/8$, and $h_{\ell-1}/h_{\ell} = 2$, for all $\ell = 1, \dots, L$. 

The quantity of interest $\phi$ is the outflow over the boundary at $x_1 = 1$. To obtain optimal convergence rates of the finite element error, we compute $\phi(p_h)$ as
\[
\phi(p_h) = - \int_D k(x;\xi_{\underline J}) \nabla w_h(x) \cdot \nabla p_h(x;\xi_{\underline J})) \mathrm{d} x,
\]
for a suitably chosen weight function $w_h$ with $w_h|_{x_1=0}=0, w_h|_{x_1=1}=1$ \cite{tsgu13}.  In particular, we choose $w_h \in V_h$ to be one at the nodes of the finite element mesh on the boundary $x_1 = 1$ and zero at all other nodes.

The data $y$ is generated from the solution of equation \eqref{eq:mod_num} with a random sample $\xi_{\underline J}$ from the prior distribution, on a fine reference mesh with $h^* = 1/256$. The observation functional $\mathcal H$ is taken as a local average pressure, representing a regularised point evaluation. To obtain $m$-dimensional data $y$, we take the uniform finite element mesh on $[0,1]^2$ with grid size $1/(\sqrt{m}+1)$, and evaluate the local average pressure at the $m$ interior nodes in this mesh. The average is taken over the six elements of the finite element mesh with $h^* = 1/256$ adjacent to that node. We furthermore add observational noise to the data $y$, which is a realisation of {an} $m$-dimensional normal random variable with mean zero and covariance $\sigma_\eta^2 I$.

To generate samples of $k$, we use a truncated Karhunen-Lo\`eve expansion \cite{ghanem_spanos}, i.e. we truncate the infinite expansion \eqref{eq:def_upara} at a finite order $ J_{KL} = 1400$. An alternative that allows to sample from the infinite expansion \eqref{eq:def_upara} would be the circulant embedding method \cite{dn97,gknss11,gknss15}.

For the QMC estimators, we choose a lattice rule with product weight parameters $\gamma_j=1/j^2$ and one random shift. The generating vector for the rule used is available from Frances Kuo's website (\texttt{http://web.maths.unsw.edu.au/$\sim$fkuo/}) as "lattice-39102-1024-1048576.3600''. {We point out here that this generating vector is a standard, off the shelf generating vector, rather than a generating vector specifically constructed for the weights implicitly defined in Assumption A4 and Lemma \ref{lem:qmc_var_linear}. 
In practice we found this generating vector to work well, even though the convergence rates in Lemma \ref{lem:qmc_var_linear} were not proven for this particular choice.}

\subsection{Mean Square Error}\label{ssec:num_mse}
We start by investigating the discretisation error, the sampling error and the mean square error of the ratio estimators, for a fixed number of observations $m = 9$ and a fixed level of observational noise $\sigma_\eta^2 = 0.09$. 

\begin{figure}[t]
\centering
\hspace*{-0.75cm}\includegraphics[width=0.5\textwidth]{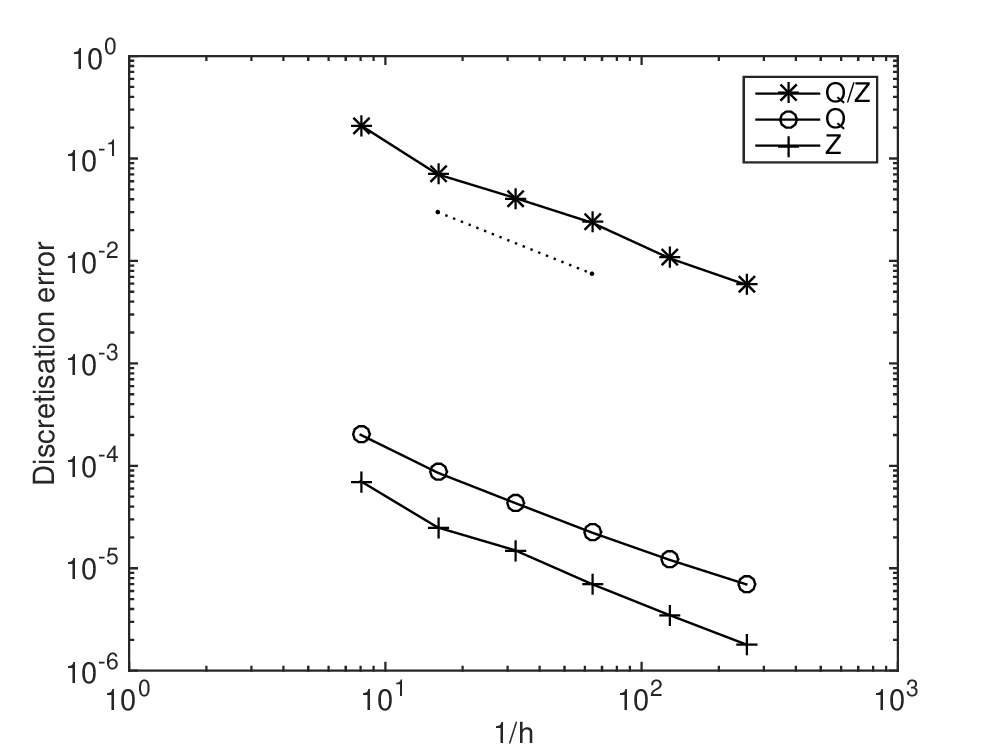}\ \includegraphics[width=0.5\textwidth]{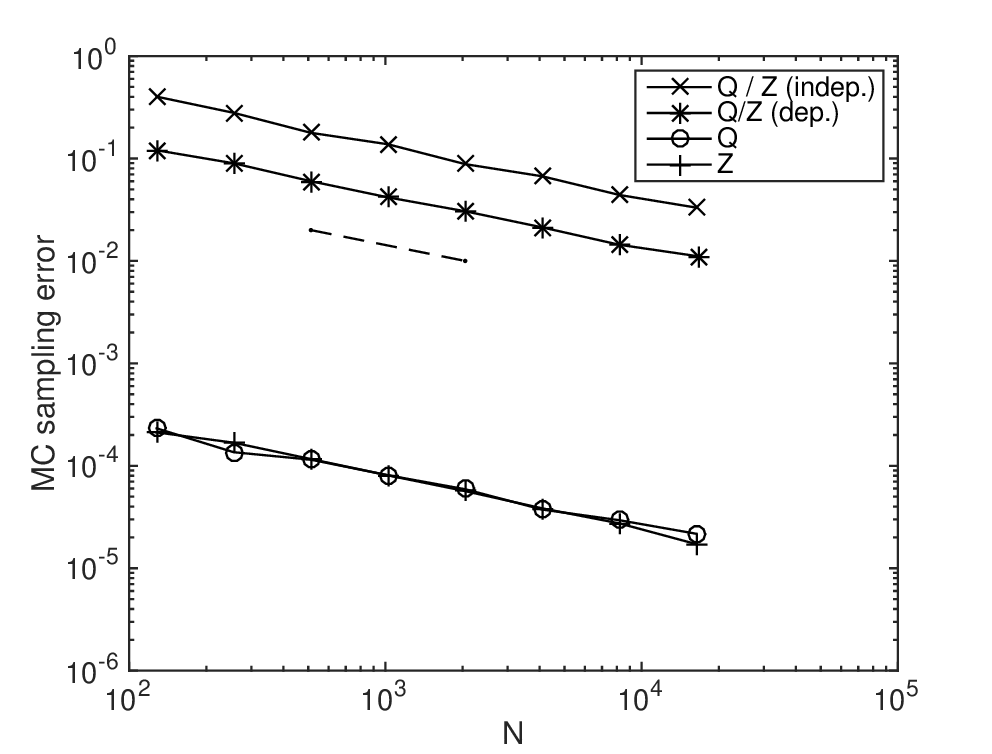}\hspace*{-0.75cm} \\
\hspace*{-0.75cm}\includegraphics[width=0.5\textwidth]{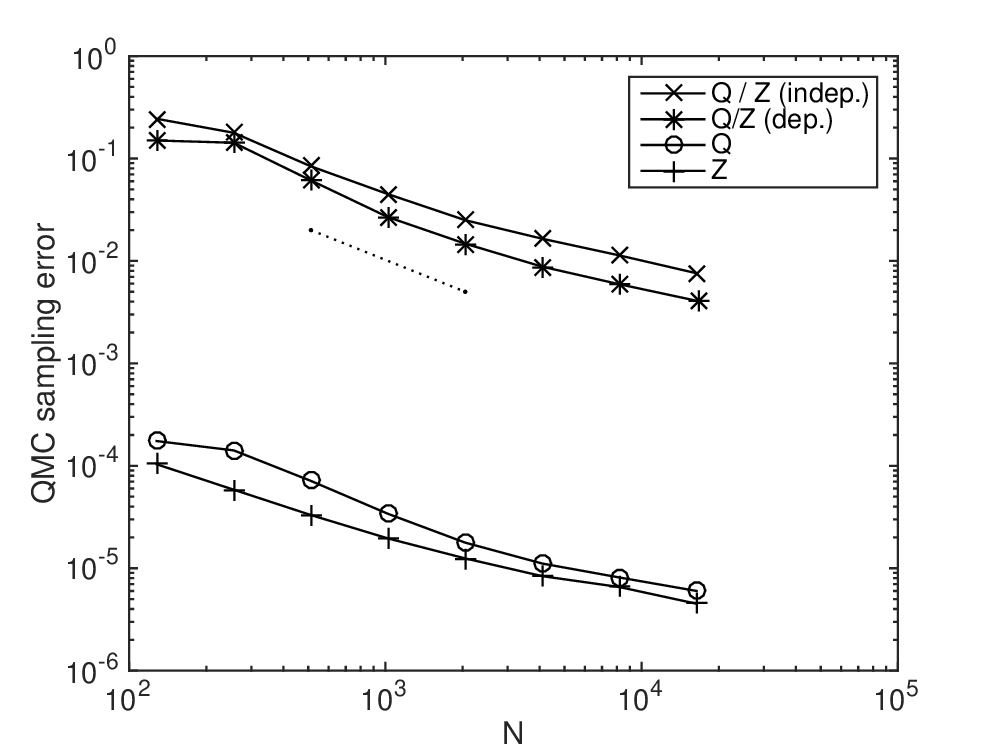}\ \includegraphics[width=0.5\textwidth]{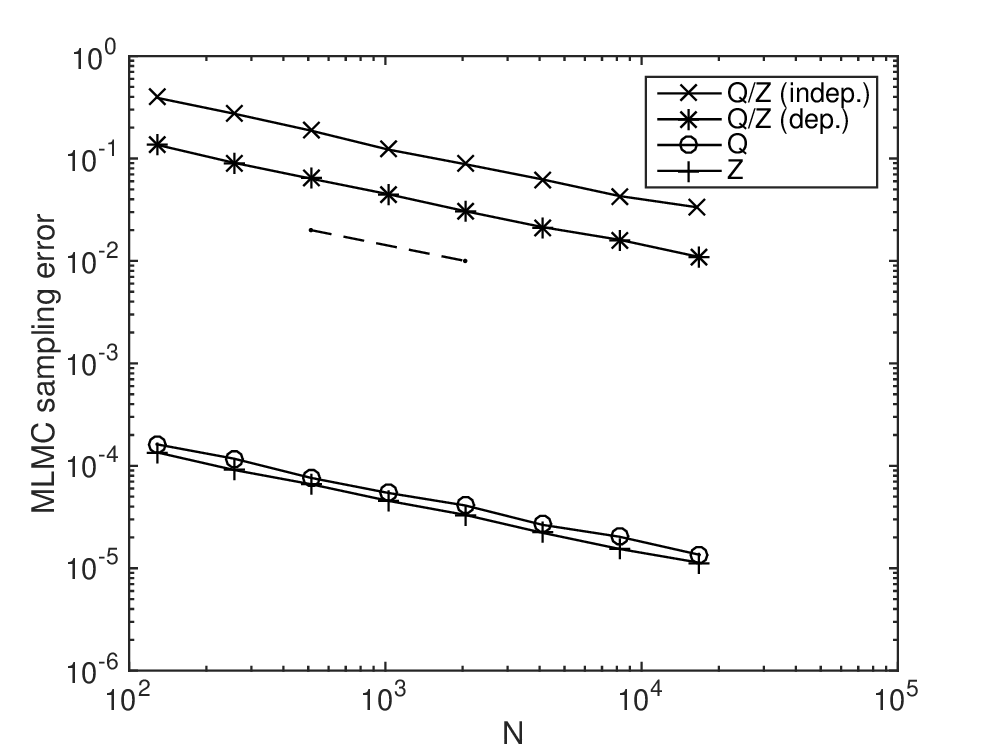}\hspace*{-0.75cm}
\caption{Convergence w.r.t. $h$ of the discretisation errors $|Q_h/Z_h - Q_{2h}/Z_{2h}|$, $|Q_h - Q_{2h}|$ and $|Z_h - Z_{2h}|$ (top left), as well as convergence w.r.t. $N$ of the sampling errors $\EE[(\widehat Q_h / \widehat Z_h - Q_h / Z_h)^2]^{1/2}$, 
$\EE[(\widehat Q_h - Q_h )^2]^{1/2}$ and $\EE[(\widehat Z_h - Z_h )^2]^{1/2}$ for MC (top right), QMC (bottom left) and MLMC (bottom right), respectively. The dotted and dashed reference slopes are $-1$ and $-1/2$, respectively.}
\label{fig:disc_samp}
\end{figure} 
 Figure \ref{fig:disc_samp} shows the discretisation error and the sampling errors of the different estimators. The top left plot shows the discretisation error $|Q_h/Z_h - Q_{2h}/Z_{2h}|$, as well as the individual discretisation errors $|Q_h - Q_{2h}|$ and $|Z_h - Z_{2h}|$. We see these errors decay linearly in $h$, as predicted by Theorem~\ref{thm:fe_post}. 

The other three plots show the sampling error $\EE[(\widehat Q_h / \widehat Z_h - Q_h / Z_h)^2]^{1/2}$, as well as the individual sampling errors $\EE[(\widehat Q_h - Q_h )^2]^{1/2}$ and $\EE[(\widehat Z_h - Z_h )^2]^{1/2}$, for MC (top right), QMC (bottom left) and MLMC (bottom right). The mesh size $h$ is fixed at $h=1/16$, and the ``exact'' expected values $Q_h$ and $Z_h$ are estimated with MLMC with a very large number of samples. For MC and QMC, $N$ on the horizontal axis represents the number of samples. For MLMC, $N$ represents the equivalent number of solves on the finest grid $h=1/16$ that would lead to the same cost as the MLMC estimator. This means that for a given $N$, the cost of all three estimators is the same. The number of samples $N_\ell$ in the MLMC estimator was chosen proportional to $h_\ell^{-(4s+\gamma)/2} \approx h_\ell^{-2}$, as suggested by the optimisation in \cite{giles08,cgst11}, assuming $s\approx1/2$ and $\gamma \approx 2$. We show results for ratio estimators with the same random samples used in $\widehat Q_h$ and $\widehat Z_h$, referred to as dependent estimators, as well as ratio estimators with different random samples used in $\widehat Q_h$ and $\widehat Z_h$, referred to as independent estimators. 
For MC and MLMC, we observe a convergence rate of $N^{-1/2}$. For QMC, we observe a convergence rate which is significantly faster than order $N^{-1/2}$ and almost order $N^{-1}$. 

Figure \ref{fig:mse} compares the computational costs of the different estimators to achieve an RMSE of $\varepsilon$. The computational cost of the estimators was computed as $N h^{-2}$ for the MC and QMC estimators, and as $N_0 h_0^{-2} + \sum_{\ell=1}^L N_\ell (h_\ell^{-2} + h_{\ell-1}^{-2})$ for the MLMC estimator. The bias $|Q_h/Z_h - Q/Z|$ was estimated from the values of $|Q_h/Z_h - Q_{2h}/Z_{2h}|$ shown in Figure \ref{fig:disc_samp}.
As predicted by Theorem \ref{thm:mse_unif} with $\gamma\approx2$ and $s \approx 1/2$, the cost of the MC estimator grows with about $\varepsilon^{-4}$, the cost of the QMC estimator grows with about $\varepsilon^{-3}$, and the cost of the MLMC estimator grows with about $\varepsilon^{-2}$. 

\begin{figure}[t]
\centering
\hspace*{-0.75cm}\includegraphics[width=0.5\textwidth]{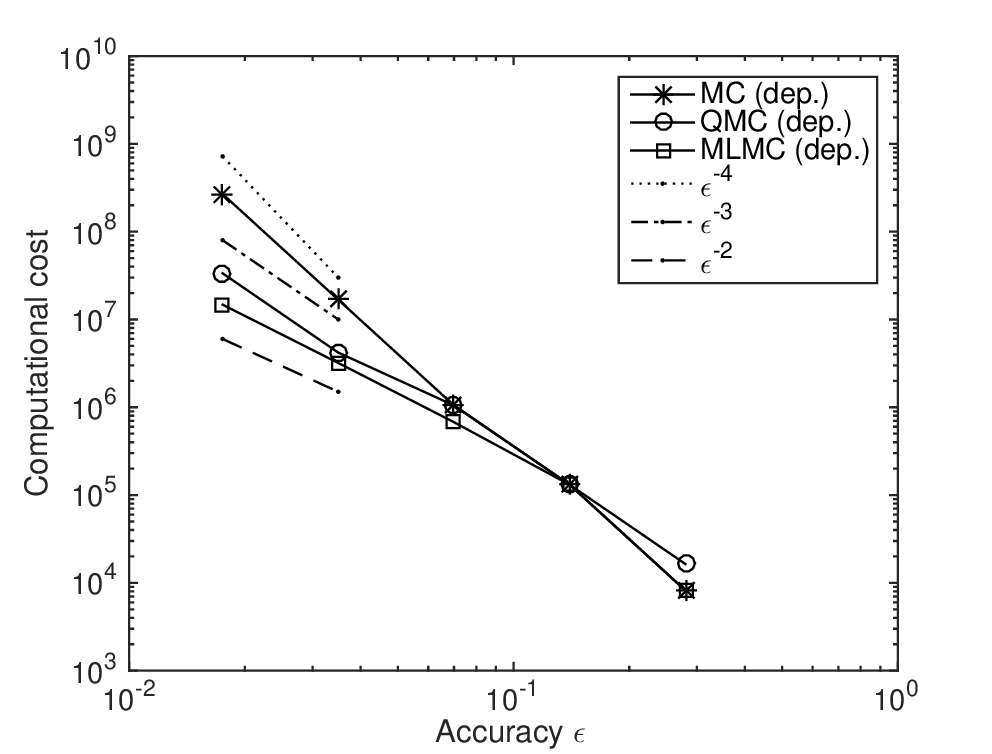}\ \includegraphics[width=0.5\textwidth]{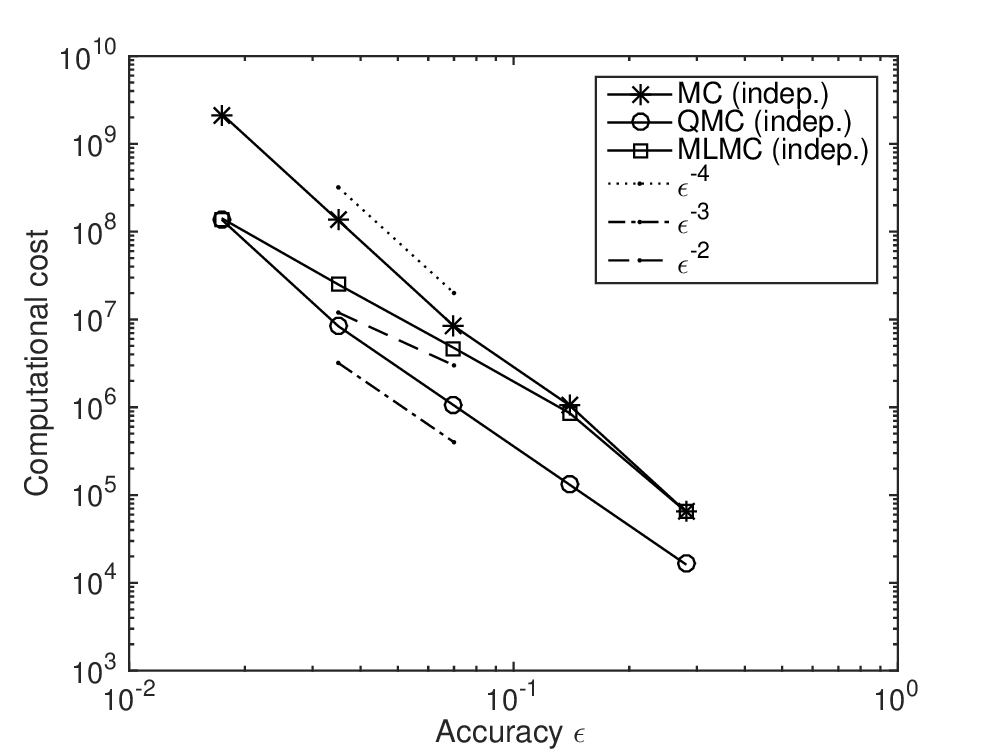}\hspace*{-0.75cm} 
\caption{Computational cost of ratio estimators $\widehat Q_h/ \widehat Z_h$ to achieve a RMSE $e(\widehat Q_h/ \widehat Z_h)$ of $\varepsilon$, using the same random samples in $\widehat Q_h$ and in $\widehat Z_h$ (left) and using different random samples in $\widehat Q_h$ and in $\widehat Z_h$ (right), respectively.}
\label{fig:mse}
\end{figure}

\subsection{Dependency on $m$ and $\sigma_\eta^2$}
Finally, we look at the dependency of the sampling error on the number of observations $m$ and on the noise level $\sigma_\eta^2$. For large values of $m$ and small values of $\sigma_\eta^2$, we expect the posterior distribution $\mu^y$ to concentrate on a small region of the parameter space. The ratio estimators sample from the prior distribution and do not make use of this fact. We expect the sampling errors to grow with increasing $m$ and decreasing $\sigma_\eta^2$. To ameliorate this problem, one can under certain assumptions rescale the parameter space before applying the ratio estimators, see e.g.  \cite{ss15} for details. 
\begin{figure}[t]
\centering
\hspace*{-0.75cm}\includegraphics[width=0.5\textwidth]{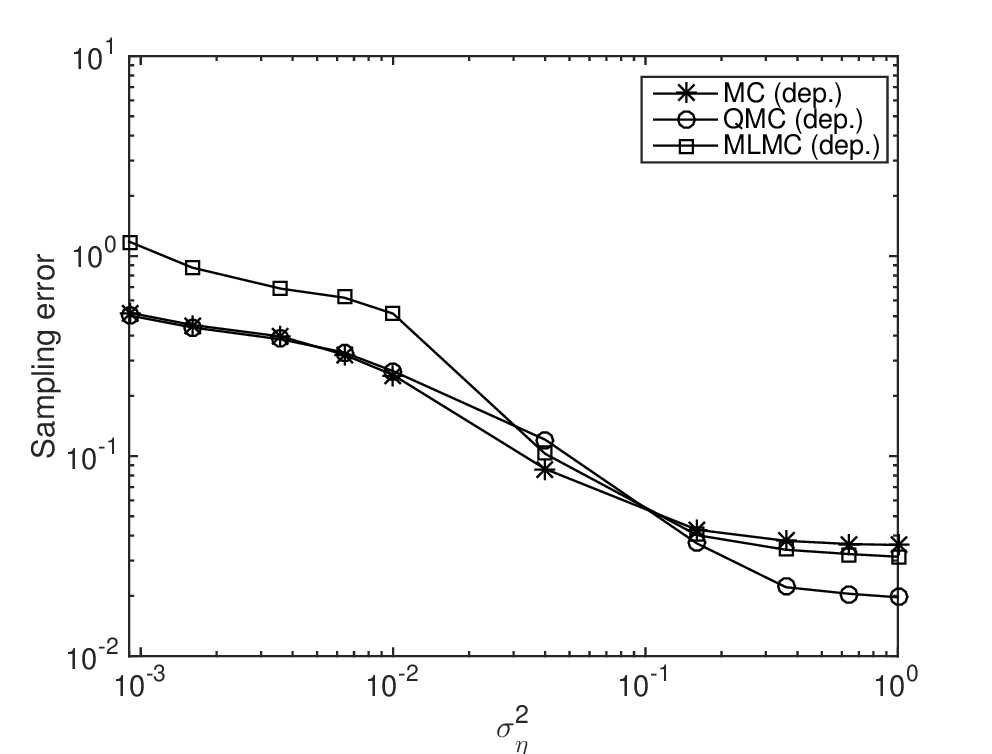}\ \includegraphics[width=0.5\textwidth]{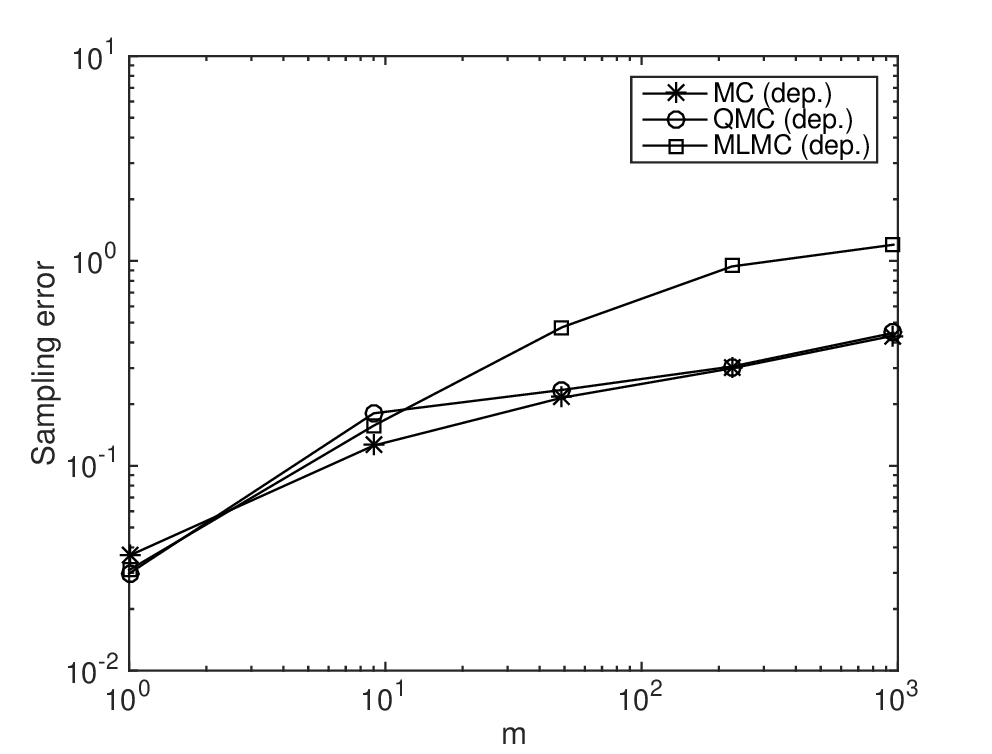}\hspace*{-0.75cm} 
\caption{Sampling errors $\EE[(\widehat Q_h / \widehat Z_h - Q_h / Z_h)^2]^{1/2}$ as a function of noise level $\sigma_\eta^2$ (left) and as a function of number of observations $m$ (right), respectively.}
\label{fig:obs_noise}
\end{figure} 

Figure \ref{fig:obs_noise} shows the sampling error of ratio estimators based on using the same samples in $\widehat Q_h$ and $\widehat Z_h$. We observe a mild growth of the sampling errors both with increasing $m$ and decreasing $\sigma_\eta^2$, but the growth is not dramatic and all estimators appear to be robust over a large range of practically interesting values. The fact that the sampling error for MLMC based estimators grows more quickly than for MC and QMC based estimators, is at least partly caused by the fact that $\VV[\psi_{h_0}]$ and $\VV[\psi_{h_1} - \psi_{h_0}]$ become of the same size for small $\sigma_\eta^2$ or large $m$, making the choice $N_\ell = C h_\ell^{-2}$ less and less optimal. Experiments with estimators based on using independent samples in $\widehat Q_h$ and $\widehat Z_h$ also showed growth of sampling errors for small $\sigma_\eta^2$ and large $m$, in fact much faster than in the case of dependent estimators.

\section{Conclusions and further work}\label{sec:conc}
In Bayesian inverse problems, the goal is often to compute the expected value of a quantity of interest under the posterior distribution. For sampling based approaches, one has to overcome the difficulty that the posterior distribution is typically intractable, in the sense that direct sampling from it is unavailable since the normalisation constant is unknown. We considered here an approach based on Bayes' theorem that computes an estimate of the normalisation constant and estimates the posterior expectation as the ratio of two prior expectations. To compute the prior expectations, we considered the sampling based approaches of Monte Carlo, quasi-Monte Carlo and multilevel Monte Carlo estimators. For a model elliptic inverse problem, we provided a full convergence and complexity analysis of the resulting ratio estimators. Our theory shows that asymptotically the complexity of computing the posterior expectation with this approach is the same as computing prior expectations, and this result is also confirmed numerically for a typical model problem in uncertainty quantification.

It would be interesting to compare the performance of the ratio estimators considered in this work to Markov chain Monte Carlo (MCMC) and multilevel Markov chain Monte Carlo (MLMCMC) methods \cite{hss13,kst13}. Especially in the case of small noise level $\sigma_\eta^2$ or large number of observations $m$, MCMC based approaches might explore the posterior distribution more efficiently. In terms of the $\varepsilon$-cost of estimators, the analysis and simulations in \cite{kst13} show that the computational cost of a standard MCMC estimator grows at the same rate as a ratio estimator based on MC, and the cost of an MLMCMC estimator will grow at the same rate as a ratio estimator based on MLMC. The constants appearing in these estimates, for MCMC based approaches, depend on quantities like the acceptance rate, autocorrelation and possibly the dimension of the parameter space. For the high dimensional problems considered in this work, these constants might be very large.

\begin{appendix}

\section{Proof of Assumption A4 for $\theta$ for linear and scalar $\mathcal{H}$}
\label{sec:appendix}

Let $m=1$, let $\mathcal{H}$ be a linear functional on $V = H^1_0(D)$ and let $k^*_{\min} := \min_{x \in \overline{D}} k^*(x) > 0$ and thus $k_{\min}(\xi_{\underline J}) \ge k^*_{\min} > 0$, for all $\xi_{\underline J} \in \mathbb{R}^J$. 
Let us assume without loss of generality that $y = 0$ and $\sigma_\eta^2 =1$, and for simplicity let $\xi = \xi_{\underline J} \in \mathbb{R}^J$. Then, $\theta(p_h(\cdot;\xi) = g(h(\xi))$ with $g(\zeta) := \exp(-\zeta^2/2) \le 1$ and $h(\xi) := \mathcal{H}(p_h(\cdot;\xi))$. To simplify the presentation, we write $\displaystyle g_n := \frac{\text{d}^ng}{\text{d}\zeta^n}(h(\xi))$ and $\displaystyle h_{\bmu} := \frac{\partial^{|\bmu|} h}{\partial \xi^{\bmu}}(\xi)$ where $\bmu$ is a multi-index in $\{0,1\}^J$. 

Let $C$ be a generic constant independent of $J$, $\bnu$ and $\xi$.
First note that due to \eqref{eq:fe_h1}  and the linearity of $\mathcal{H}$
\[
|h(\xi)| \le \|\mathcal{H}\|_{H^{-1}(D)} |p_h|_{H^1_0(D)} \le \frac{\|\mathcal{H}\|_{H^{-1}(D)} \|f\|_{H^{-1}(D)}}{k^*_{\min}} =: \kappa_*.
\]
Then, we have $g_n(\zeta) = (-1)^nH_n(\zeta) g(\zeta)$ where $H_n$ is the $n$th Hermite polynomial, and so
\begin{equation}
\label{eq:g_n_bound}
|g_n(h(\xi))| \le C \max \{1,|h(\xi)|^n\} \, g(h(\xi)) \le C \max \{1,\kappa_*^n\}\,.
\end{equation}
Moreover, it was shown in \cite[Theorem 16]{gknsss15} that, for linear $\mathcal{H}$,
\begin{equation}
\label{eq:h_bmu_bound}
|h_{\bmu} (\xi)| \le \kappa_* \frac{|\bmu|!}{(\ln 2)^{|\bmu|}} \prod_{j=1}^J b_j^{\mu_j}\,.
\end{equation}

Now, Faa di Bruno's formula for the special case where $\bnu \in \{0,1\}^J$ and where $h(\xi)$ is scalar (cf. \cite[Corollary 2.10]{cs96}) states that
\[
\theta_{\bnu} = \sum_{r=1}^{|\bnu|} \; g_r \sum_{P(r,\bnu)} \; \prod_{i=1}^r h_{\bmu^{(i)}}
\]
where
\[
P(r,\bnu) := \left\{\bmu^{(1)},\ldots,\bmu^{(r)} : \boldsymbol{0} \prec \bmu^{(1)} \prec \ldots \prec \bmu^{(r)} \text{ and } \sum_{i=1}^r \bmu^{(i)} = \bnu\right\}
\] 
and $\prec$ indicates some unique linear ordering of multi-indices (see \cite[p.~505]{cs96} for an example). And so, using \eqref{eq:g_n_bound} and \eqref{eq:h_bmu_bound}, we get
\begin{align*}
|\theta_{\bnu}| & \le C \left| \sum_{r=1}^{|\bnu|} \; \max\{1,\kappa_*^r\} \sum_{P(r,\bnu)}  \prod_{i=1}^r \left(\kappa_* \frac{|\bmu^{(i)}|!}{(\ln 2)^{|\bmu^{(i)}|}} \prod_{j=1}^J b_j^{\mu^{(i)}_j} \right)\right| \\
& \le C \left(\frac{\max\{\kappa_*,\kappa_*^2\}}{\ln 2}\right)^{|\bnu|} \underbrace{\left( \sum_{r=1}^{|\bnu|} \sum_{P(r,\bnu)} \prod_{i=1}^r |\bmu^{(i)}|! \right)}_{=:\rho_{\bnu}} \; \prod_{j=1}^J b_j^{\nu_j}
\end{align*}
since all elements of $P(r,\bnu)$ satisfy $\sum_{i=1}^r \bmu^{(i)} = \bnu$ and $\sum_{i=1}^r |\bmu^{(i)}| = |\bnu|$. 
It remains to bound $\rho_{\bnu}$. We give a simple but fairly crude bound. 

First note that, for each element $(\bmu^{(1)},\ldots,\bmu^{(r)}) \in P(r,\bnu)$, the moduli $|\bmu^{(i)}|$, $i=1,\ldots,r$, form a partition of $n := |\bnu|$. Hence, instead of
partitioning the summands in $\rho_{\bnu}$ into the subsets $P(r,\bnu)$, we can also sum over all possible partitions $k_1,\ldots,k_r$ of $n$ with $1 \le r \le n$. The partition function $p(n)$ is the number of possible partitions of $n$. It is bounded by $\exp(\pi \sqrt{2n/3})$ \cite{azev09}. For each partition $k_1,\ldots,k_r$ of $n$,  the number of possible elements in $P(r,\bnu)$ that satisfy $|\bmu_i| = k_i$ can be bounded by
\[
\left( {n \atop k_1}\right) \left( {n-k_1 \atop k_2}\right) \left( {n-\sum_{i=1}^2 k_i \atop  k_3}\right) \ldots \left( {n-\sum_{i=1}^{r-1} k_i \atop  k_r}\right) = \frac{n!}{k_1 !k_2 ! \ldots k_r !}
\]
Elements of $P(r,\bnu)$ where $|\bmu_i| = |\bmu_j|$, for some $i \not= j$, are counted twice in this bound. Since $\prod_{i=1}^r |\bmu^{(i)}|! = k_1 !k_2 ! \ldots k_r !$, we finally get the bound
\[
\rho_{\bnu} \le p\big(|\bnu|\big) \, |\bnu|! \le \exp\left(\pi \sqrt{\frac{2|\bnu|}{3}}\right) \, |\bnu|! \;.
\]
Hence, there exists a constant $c_p > 1$ such that Assumption A4 holds with $c_1 := c_p \max\{\kappa_*,\kappa_*^2\}/\ln 2$.

\end{appendix}

\small
\bibliographystyle{siam}
\bibliography{bibMLMC}

\end{document}